\theoremstyle{theorem}
\newtheorem{thm}{Theorem}[section]
\newtheorem{lem}[thm]{Lemma}
\newtheorem{prop}[thm]{Proposition}
\theoremstyle{definition}
\newtheorem{rmk}[thm]{Remark}
\newtheorem{defn}[thm]{Definition}
\newcommand{\sI}{\mathcal{I}}
\newcommand{\sO}{\mathcal{O}}
\newcommand{\sR}{\mathcal{R}}
\newcommand{\Aa}{\mathbb{A}}
\newcommand{\CC}{\mathbb{C}}
\newcommand{\PP}{\mathbb{P}}
\newcommand{\QQ}{\mathbb{Q}}
\newcommand{\ZZ}{\mathbb{Z}}
\newcommand{\bA}{\mathbf{A}}
\newcommand{\bD}{\mathbf{D}}
\newcommand{\bE}{\mathbf{E}}
\newcommand{\frakm}{\mathfrak{m}}
\DeclareMathOperator{\Proj}{Proj}
\DeclareMathOperator{\Spec}{Spec}
\DeclareMathOperator{\Bl}{Bl}
\title[Divisorial Extractions from Singular Curves in Smooth $3$-folds, I]{Divisorial Extractions from Singular \\ Curves in Smooth $3$-folds, I}
\author{Tom Ducat}
\address{Mathematics Institute, University of Warwick, Coventry CV4 7AL, UK}
\email{T.Ducat@warwick.ac.uk}
\date{\today}         
\begin{document}

\maketitle
\setcounter{secnumdepth}{2}
\setcounter{tocdepth}{1}
\addtolength{\parskip}{0.5ex}

\begin{abstract}
Consider a singular curve $\Gamma$ contained in a smooth $3$-fold $X$.  Assuming the general elephant conjecture, the general hypersurface section $\Gamma\subset S\subset X$ is Du Val. Under that assumption, this paper describes the construction of a divisorial extraction from $\Gamma$ by Kustin--Miller unprojection. Terminal extractions from $\Gamma\subset X$ are proved not to exist if $S$ is of type $D_{2k}, E_7$ or $E_8$ and are classified if $S$ is of type $A_1,A_2$ or $E_6$. The $A_n$ and $D_{2k+1}$ cases are considered in a further paper. 
\end{abstract}

\tableofcontents

\section*{Introduction}

Much of the birational geometry of terminal $3$-folds has been classified explicitly. For example there is a classification of terminal $3$-fold singularities by Mori and Reid \cite{ypg}, a classification of exceptional $3$-fold flips by Koll\'ar and Mori \cite{km92} and, more recently, some work done by Hacking, Tevelev and Urz\'ua \cite{htu} and Brown and Reid \cite{dip} to describe type $A$ flips. In the case of divisorial contractions much is known, but there is not currently a complete classification. 

The focus of this paper is the case of a singular curve $\Gamma$ contained in a smooth $3$-fold $X$. Assuming the general elephant conjecture, the general hypersurface section $\Gamma\subset S\subset X$ is Du Val. Under this assumption, \S2 gives a normal form for the equations of $\Gamma$ in $X$ along with an outline for the construction of a (specific) divisorial extraction from $\Gamma$. In \S\S3-4 cases are studied explicitly by considering the type of Du Val singularity of $S$. I prove that if the general hypersurface through $\Gamma$ is a type $D_{2k}$ or $E_7$ singularity then a terminal divisorial extraction from $\Gamma$ does not exist. If it is a type $E_6$ singularity then there is an explicit description of the curves $\Gamma$ for which a terminal divisorial extraction exists. 

As well as treating the $A_1$ and $A_2$ cases, the main result in this paper is the following:
\begin{thm}
Suppose that $P\in \Gamma\subset X$ is the germ of a non-lci curve singularity $P\in\Gamma$ inside a smooth $3$-fold $P\in X$. Suppose moreover that the general hypersurface section $\Gamma\subset S\subset X$ is Du Val. Then,

\begin{enumerate}
\item if $S$ is of type $D_{2k}$ or $E_7$ then a terminal extraction from $\Gamma\subset X$ does not exist,

\item if $S$ is of type $E_6$ then a terminal extraction exists only if $\Gamma\subset S$ is a curve whose birational transform in the minimal resolution of $S$ intersects the exceptional locus with multiplicity given by either\begin{center} \begin{tikzpicture}
	\node at (1,1) {$\bullet$};
	\node at (2,1) {$\bullet$};
	\node[label={[label distance=-0.2cm]80:\tiny{$1$}}] at (3,1) {$\bullet$};
	\node at (4,1) {$\bullet$};
	\node[label={[label distance=-0.2cm]90:\tiny{$2$}}] at (5,1) {$\bullet$};
	\node at (3,2) {$\bullet$};
	\draw
       		(1,1)--(2.925,1)
       		(3.075,1)--(4.925,1)
       		(3,1.075)--(3,2);  
		
	\node at (6,1.5) {or};

	\node at (7,1) {$\bullet$};
	\node at (8,1) {$\bullet$};
	\node[label={[label distance=-0.2cm]80:\tiny{$1$}}] at (9,1) {$\bullet$};
	\node[label={[label distance=-0.2cm]90:\tiny{$1$}}] at (10,1) {$\bullet$};
	\node at (11,1) {$\bullet$};
	\node at (9,2) {$\bullet$};
	\draw
       		(7,1)--(8.925,1)
       		(9.075,1)--(9.925,1)
       		(10.075,1)--(10.925,1)
       		(9,1.075)--(9,2);  
\end{tikzpicture}  \end{center}
(an unlabelled node means multiplicity zero).
\end{enumerate}
\end{thm}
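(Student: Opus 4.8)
The plan is to take the divisorial extraction $f\colon Y\to X$ produced by the Kustin--Miller unprojection of \S2 and to test its terminality by means of the general elephant. Because $X$ is smooth and $S\in|-K_X|$ is a general member through $\Gamma$, the surface $S$ is smooth at the generic point of $\Gamma$, so $\mathrm{ord}_E(f^*S)=1$ and $f^*S=T+E$, where $T=f^{-1}_*S$ is the birational transform. Hence $K_Y=f^*K_X+E$, the extraction has discrepancy $1$, and $-K_Y\sim f^*S-E=T$, so that $T$ is the general anticanonical member of $Y$. Since $K_Y$ is Cartier, Reid's general elephant theorem says that the Gorenstein $3$-fold $Y$ is terminal exactly when its general elephant is Du Val; therefore the whole question reduces to deciding whether the surface $T$ cut out by the unprojection is Du Val.

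Next I would transport the data to the minimal resolution $\mu\colon\widetilde S\to S$, whose exceptional curves $E_1,\dots,E_n$ form the Dynkin diagram of the relevant type, and record $\Gamma$ by the intersection numbers $m_i=\widetilde\Gamma\cdot E_i\ge 0$ of its birational transform. The non-lci hypothesis is equivalent to $\Gamma$ restricting to a non-Cartier Weil divisor on the general $S$, i.e.\ to the class $[\Gamma]\in\mathrm{Cl}(S)$ being non-zero; since $\mathrm{Cl}(S)$ is the cokernel of the Cartan matrix, this is precisely the condition that the residue of $(m_i)$ in the discriminant group is non-zero. The unprojection equations of \S2 are determined by $(m_i)$, and with them so is the surface $T$. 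The crucial subtlety is that $T$ is \emph{a priori} only a hypersurface section of the unprojection and need not be a partial resolution of $S$ at all---it may be non-normal or carry singularities worse than Du Val---so terminality becomes the explicit question of which vectors $(m_i)$ keep $T$ Du Val.

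The analysis now splits according to the type of $S$, and I expect part (1) to be the principal difficulty. For $S$ of type $D_{2k}$ or $E_7$ the discriminant group is $(\ZZ/2)^2$, respectively $\ZZ/2$, and I would show that for \emph{every} non-zero class and \emph{every} effective $(m_i)$ realising it the surface $T$ fails to be Du Val---typically because the generators of the discriminant group are supported at vertices which force the central fibre of $E\to\Gamma$ to acquire a non-rational or non-isolated singularity on $Y$. The essential obstacle is that this is a non-existence statement: one must eliminate all candidate configurations rather than exhibit a single good one, so the explicit unprojection equations are indispensable for pinning down the singularity type of $T$ uniformly across the (infinitely many, for $D_{2k}$) realisations.

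For $S$ of type $E_6$, where $\mathrm{Cl}(S)=\ZZ/3$, the argument is instead an enumeration. The two non-zero classes are exchanged by the diagram involution, so it is enough to list the minimal effective configurations $(m_i)$ realising one generator for which $T$ is Du Val; computing the elephant from the unprojection equations shows that, up to symmetry, the survivors are exactly the two diagrams of the statement---$\widetilde\Gamma$ meeting the central (trivalent) vertex with multiplicity one and either the far vertex of a length-two arm with multiplicity two, or the adjacent vertex of such an arm with multiplicity one. Any larger $(m_i)$ in the same class forces either an extra base point or a worse-than-Du-Val point on $T$, so terminality singles out precisely these two configurations and the theorem follows.
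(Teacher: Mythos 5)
Your construction step (build the extraction by Kustin--Miller unprojection and, via the uniqueness of Proposition \ref{uniq}, test that one candidate) agrees with the paper. The gap is in your terminality test. You write ``since $K_Y$ is Cartier, \dots\ $Y$ is terminal exactly when its general elephant is Du Val,'' but $K_Y$ is \emph{not} Cartier here: the models the unprojection produces have index $2$ (types $\bD$, $\bE_7$), $3$ or $4$ (type $\bE_6$), and indeed $Y$ \emph{cannot} be Gorenstein, since by Cutkosky's theorem (Theorem \ref{cutthm}) a Gorenstein terminal extraction from a curve forces $\Gamma$ to be lci, contradicting your standing hypothesis. Moreover the equivalence you invoke is false even for Gorenstein $3$-folds: a Du Val general section characterises cDV points, whereas terminal means \emph{isolated} cDV, and isolatedness is exactly what is at stake. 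Worst of all, under the paper's hypotheses the criterion ``$T$ is Du Val'' is essentially vacuous: since $T=f^*S-E$ and $K_Y=f^*K_X+E$, one has $K_Y+T=f^*(K_X+S)$, so by adjunction $T\to S$ is a crepant birational morphism onto the Du Val surface $S$, and hence $T$ (when normal) has only Du Val singularities \emph{whether or not $Y$ is terminal}. Concretely, in the $\bD_n^l$ case the paper shows the chart $U_\zeta$ is a $\tfrac12$-quotient of a hypersurface whose equation lies in $(y,\xi_1,\xi_2)^2$, so $Y$ is singular along the whole line $L_1$ of the central fibre; yet the transverse singularity is of $cA$ type, so the elephant through a point of $L_1$ is still Du Val. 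Your test would therefore wrongly assert existence in part (1), which is precisely the non-existence half of the theorem. (Also note that the general elephant statement is, in this setting, a conjecture --- Koll\'ar--Mori prove it only for irreducible central fibre, and here the fibres are reducible --- and it runs in the direction terminal $\Rightarrow$ Du Val, not the converse you need.)

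What the paper does instead, and what your argument is missing, is a direct analysis of the singularities of the explicit model $Y$: for $\bD_n^l$, $\bD_{2k}^r$, $\bE_7$ one exhibits the non-isolated singularity above; for $\bE_6$ one computes the unprojection points ($cD_4/3$ in the case $c\in\frakm$, $cAx/4$ in the case $c\notin\frakm$) and determines when they are isolated and terminal, which forces $a, a+f, f\notin\frakm$, respectively $a\notin\frakm$. These coefficient conditions, read off on the minimal resolution of $S$, are what produce exactly the two intersection diagrams in the statement. Your final paragraph gestures at an enumeration by classes in $\mathrm{Cl}(S)\cong\ZZ/3$, but the distinction between the two $\bE_6$ diagrams is not visible in the class group at all (both come from the same nontrivial character, up to symmetry); it is detected by the finer data $c\in\frakm$ versus $c\notin\frakm$, i.e.\ by whether the restriction $\sI_{\Gamma/X}\to\sI_{\Gamma/S}$ is surjective, which governs whether the extraction closes up in codimension $4$ or $5$.
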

A more precise statement is given in Theorem \ref{excthm}.

\subsubsection{Acknowledgements}\

I would like to thank my PhD supervisor Miles Reid for suggesting this problem to me and for the help and guidance he has given me.

Part of this research was done whilst visiting Japan. I would like to thank Professor Yurijo Kawamata for inviting me and Stavros Papadakis for some helpful discussions whilst I was there.

\section{Preliminaries}

\subsection{Du Val singularities} \

The Du Val singularities are a very famous class of surface singularities. They can be defined in many different ways, a few of which are given here.

\begin{defn} 
Let $P\in S$ be the germ of a surface singularity. Then $P\in S$ is a \emph{Du Val singularity} if it is given, up to isomorphism, by one of the following equivalent conditions:
\begin{enumerate}
\item a hypersurface singularity $0\in V(f)\subset \Aa^3$, where $f$ is one of the equations of Table \ref{dvtab}, given by an ADE classification.
\item a quotient singularity $0\in \CC^2/G=\text{Spec }\CC[u,v]^G$, where $G$ is a finite subgroup of $\text{SL}(2,\CC)$.
\item a rational double point, i.e.\ the minimal resolution 
\[\mu \colon (E\subset \widetilde{S}) \to (P\in S)\] 
has exceptional locus $E=\bigcup E_i$ a tree of $-2$-curves with intersection graph given by the corresponding ADE Dynkin diagram. 
\item a canonical surface singularity. As $P\in S$ is a surface singularity this is equivalent to having a crepant resolution, i.e.\ $K_{\widetilde{S}}=\mu^*K_S$.
\item a simple hypersurface singularity, i.e.\ $0\in V(f)\subset \Aa^3$ such that there exist only finitely many ideals $I\subset\frakm$ with $f\in I^2$ (where $\frakm$ is the ideal of $0\in\Aa^3$).
\end{enumerate}
\label{dvdef}
\end{defn}

See for example \cite{duval} for details of (1)-(4) and \cite{yosh} for details of (5).
\begin{table}[htdp]
\caption{Types of Du Val singularities}
\label{dvtab}
\begin{center}
\begin{tabular}{cccc}
Type & Group $G$ & Equation $f$ & Dynkin diagram \\ \hline
$A_n$ & cyclic $\tfrac1r(1,-1)$  & $x^2 + y^2 + z^{n+1}$ & 
\begin{tikzpicture}[scale=0.8]
	\node[label={[label distance=-0.2cm]90:\tiny{1}}] at (1,1) {$\bullet$};
	\node[label={[label distance=-0.2cm]90:\tiny{1}}] at (2,1) {$\bullet$};
	\node at (3,1) {$\cdots$};
	\node[label={[label distance=-0.2cm]90:\tiny{1}}] at (4,1) {$\bullet$}; 
	\node[label={[label distance=-0.2cm]90:\tiny{1}}] at (5,1) {$\bullet$};  
	\node at (4,2) { };
	\node[below] at (3,1) {\tiny{($n$ nodes)}};
	\draw
       		(1,1)--(2.5,1);  
	\draw
       		(3.5,1)--(5,1);  
\end{tikzpicture} \\
$D_n$ & binary dihedral & $x^2 + y^2z + z^{n-1}$ & 
\begin{tikzpicture}[scale=0.8]
	\node[label={[label distance=-0.2cm]90:\tiny{1}}] at (1,1) {$\bullet$};
	\node[label={[label distance=-0.2cm]90:\tiny{2}}] at (2,1) {$\bullet$};
	\node at (3,1) {$\cdots$};
	\node[label={[label distance=-0.2cm]90:\tiny{2}}] at (4,1) {$\bullet$};
	\node[label={[label distance=-0.2cm]90:\tiny{1}}] at (5,1.5) {$\bullet$};
	\node[label={[label distance=-0.2cm]90:\tiny{1}}] at (5,0.5) {$\bullet$};
	\node[below] at (3,1) {\tiny{($n$ nodes)}};
	\draw
       		(1,1)--(2.5,1);  
	\draw
       		(3.5,1)--(4,1);  
	\draw
       		(4,1)--(5,1.5);  
	\draw
       		(4,1)--(5,0.5);  
\end{tikzpicture} \\ 
$E_6$ & binary tetrahedral & $x^2 + y^3 + z^4$ & 
\begin{tikzpicture}[scale=0.8]
	\node[label={[label distance=-0.2cm]90:\tiny{1}}] at (1,1) {$\bullet$};
	\node[label={[label distance=-0.2cm]90:\tiny{2}}] at (2,1) {$\bullet$};
	\node[label={[label distance=-0.2cm]80:\tiny{3}}] at (3,1) {$\bullet$};
	\node[label={[label distance=-0.2cm]90:\tiny{2}}] at (4,1) {$\bullet$};
	\node[label={[label distance=-0.2cm]90:\tiny{1}}] at (5,1) {$\bullet$};
	\node[label={[label distance=-0.2cm]90:\tiny{2}}] at (3,2) {$\bullet$};
	\draw
       		(1,1)--(5,1);  
	\draw
       		(3,1)--(3,2);  
\end{tikzpicture} \\ 
$E_7$ & binary octahedral & $x^2 + y^3 + yz^3$ & 
\begin{tikzpicture}[scale=0.8]
	\node[label={[label distance=-0.2cm]90:\tiny{1}}] at (1,1) {$\bullet$};
	\node[label={[label distance=-0.2cm]90:\tiny{2}}] at (2,1) {$\bullet$};
	\node[label={[label distance=-0.2cm]90:\tiny{3}}] at (3,1) {$\bullet$};
	\node[label={[label distance=-0.2cm]80:\tiny{4}}] at (4,1) {$\bullet$};
	\node[label={[label distance=-0.2cm]90:\tiny{3}}] at (5,1) {$\bullet$};
	\node[label={[label distance=-0.2cm]90:\tiny{2}}] at (6,1) {$\bullet$};
	\node[label={[label distance=-0.2cm]90:\tiny{2}}] at (4,2) {$\bullet$};
	\draw
       		(1,1)--(6,1);  
	\draw
       		(4,1)--(4,2);  
\end{tikzpicture}  \\ 
$E_8$ & binary isocahedral & $x^2 + y^3 + z^5$ &
\begin{tikzpicture}[scale=0.8]
	\node[label={[label distance=-0.2cm]90:\tiny{2}}] at (1,1) {$\bullet$};
	\node[label={[label distance=-0.2cm]90:\tiny{3}}] at (2,1) {$\bullet$};
	\node[label={[label distance=-0.2cm]90:\tiny{4}}] at (3,1) {$\bullet$};
	\node[label={[label distance=-0.2cm]90:\tiny{5}}] at (4,1) {$\bullet$};
	\node[label={[label distance=-0.2cm]80:\tiny{6}}] at (5,1) {$\bullet$};
	\node[label={[label distance=-0.2cm]90:\tiny{4}}] at (6,1) {$\bullet$};
	\node[label={[label distance=-0.2cm]90:\tiny{2}}] at (7,1) {$\bullet$};
	\node[label={[label distance=-0.2cm]90:\tiny{3}}] at (5,2) {$\bullet$};
	\draw
       		(1,1)--(7,1);  
	\draw
       		(5,1)--(5,2);  
\end{tikzpicture}  \\ 
\end{tabular}
\end{center}
\end{table}%

The numbers decorating the nodes of the Dynkin diagrams in Table \ref{dvtab} have several interpretations. For example, each node corresponds to the isomorphism class of a nontrivial irreducible representation of $G$ with dimension equal to the label. Another way these numbers arise is as the multiplicities of the $E_i$ in the \emph{fundamental cycle} $\Sigma\subset\widetilde{S}$ (that is, the unique minimal effective $1$-cycle such that $\Sigma \cdot E_i \leq 0$ for every component $E_i$ of $E$).

\subsection{Terminal $3$-fold singularities} \

One of the most useful lists at our disposal is Mori's list of $3$-fold terminal singularities. (See \cite{ypg} for a nice introduction.) Terminal singularities always exist in codim $\geq3$, so in the $3$-fold case they are all isolated points $P\in X$. They are classified by their index (the least $r\in\ZZ_{>0}$ such that $rD$ is Cartier, given any Weil divisor $D$ through $P\in X$). 

As shown by Reid, the index 1, or Gorenstein, singularities are \emph{compound Du Val} (cDV) singularities, i.e.\ isolated hypersurface singularities of the form
\[ \big(f(x,y,z) + tg(x,y,z,t) = 0\big) \subset \Aa^4_{x,y,z,t} \]
where $f$ is the equation of a Du Val singularity.

The other cases are the non-Gorenstein singularities. They can be described as cyclic quotients of cDV points by using the index to form a covering. For example, a singularity of type $cA/r$ denotes the quotient of a type $cA$ singularity
\[ \big( xy + f(z^r, t) = 0 \big) \subset \Aa^4_{x,y,z,t}  \: / \:  \tfrac1r(a,r-a,1,0) \]
where $\tfrac1r(a,r-a,1,0)$ denotes the $\ZZ/r\ZZ$ group action $(x,y,z,t) \mapsto (\epsilon^ax,\epsilon^{r-a}y,\epsilon z,t)$, for a primitive $r$th root of unity $\epsilon$. The general elephant of this singularity is given by an $r$-to-1 covering $A_{n-1}\to A_{rn-1}$. A full list can be found in \cite{km92}, p.\ 541.

\subsection{Divisorial contractions} \

\begin{defn}
A projective birational morphism $\sigma\colon Y\to X$ is called a \emph{divisorial contraction} if 
\begin{enumerate}
\item $X$ and $Y$ are quasiprojective $\QQ$-factorial (analytic or) algebraic varieties, 
\item there exists a unique prime divisor $E\subset Y$ such that $\Gamma=\sigma(E)$ has $\text{codim}_X\Gamma\geq2$, 
\item $\sigma$ is an isomorphism outside of $E$, 
\item $-K_Y$ is $\sigma$-ample and the relative Picard number is $\rho({Y/X})=1$.
\end{enumerate}
Given the curve $\Gamma\subset X$, we will also call any such $\sigma\colon Y\to X$ a \emph{divisorial extraction} of $\Gamma$. Moreover, if both $X$ and $Y$ have terminal singularities (so that this map belongs in the Mori category of terminal $3$-folds) then we call $\sigma$ a \emph{Mori contraction/extraction}. 
\end{defn}

Since the question of classifying divisorial contractions is local on $X$ we assume that $\sigma$ is a \emph{divisorial neighbourhood}, i.e.\ a map of $3$-fold germs
\[ \sigma \colon (Z\subset E\subset Y) \to (P\in\Gamma\subset X) \]
where $Z=\sigma^{-1}(P)$ is a (not necessarily irreducible) reduced complete curve. In practice $X$ is the germ of an affine variety over $\CC$ and it is assumed that we can make any analytic change of variables that needs to take place. In particular, as we are primarily interested in this paper with the case where $X$ is smooth, we can implicitly assume $(P\in X)\cong (0\in \Aa^3)$.

\subsubsection{Known results.}\

For $3$-folds, divisorial contractions fall into two cases:
\begin{enumerate}
\item $P=\Gamma$ is a point,
\item $P\in \Gamma$ is a curve.
\end{enumerate}

The first case has been studied intensively and is completely classified if $P\in X$ is a non-Gorenstein singularity. This follows from the work of a number of people---Corti, Kawakita, Hayakawa and Kawamata amongst others.

In either case, Mori and Cutkosky classify Mori contractions when $Y$ is Gorenstein. In particular, Cutkosky's result for a curve $\Gamma$ is the following.

\begin{thm}[Cutkosky \cite{cut}]\label{cutthm}
Suppose $\sigma\colon (E\subset Y)\to (\Gamma\in X)$ is a Mori contraction where $Y$ has at worst Gorenstein (i.e.\ index 1) singularities and $\Gamma$ is a curve. Then 
\begin{enumerate}
\item $\Gamma$ is a reduced, irreducible, local complete intersection curve in $X$,
\item $X$ is smooth along $\Gamma$, 
\item  $\sigma$ is isomorphic to the blowup of the ideal sheaf $\sI_{\Gamma/X}$, 
\item $Y$ only has $cA$ type singularities and 
\item a general hypersurface section $\Gamma\subset S$ is smooth.
\end{enumerate}
\end{thm}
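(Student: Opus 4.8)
The plan is to reduce the three-dimensional contraction to a contraction of surfaces, where the geometry is completely understood, by cutting with general hyperplane sections and exploiting the fact (due to Reid) that a terminal Gorenstein $3$-fold has only isolated cDV singularities. Throughout I would write $F=\sigma^{-1}(P)$ for the fibre over a point $P\in\Gamma$ and use that $-K_Y$ is $\sigma$-ample with $\rho(Y/X)=1$.

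First I would treat a general point $P\in\Gamma$. Choose a general hyperplane $H\subset X$ through $P$ meeting $\Gamma$ transversally, so that $H\cap\Gamma=\{P\}$ near $P$, and set $T=\sigma^{-1}(H)$. Since the singular points of $Y$ are isolated and a general fibre $F$ avoids them, Bertini gives that $T$ is smooth along $F$, and $\sigma|_T\colon T\to H$ is a birational morphism contracting $F$ to $P$. Because $T=\sigma^*H$ is a pullback, adjunction $K_T=(K_Y+T)|_T$ shows that $T|_T$ is $\sigma|_T$-trivial, whence $-K_T=-K_Y|_T$ is relatively ample; as $\rho(T/H)=\rho(Y/X)=1$ the contracted locus is a single irreducible curve $C$ with $C^2<0$ and $K_T\cdot C<0$. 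Adjunction $2p_a(C)-2=C^2+K_T\cdot C$ then forces $C\cong\PP^1$ with $C^2=K_T\cdot C=-1$, so $\sigma|_T$ is the blowdown of a $(-1)$-curve. By Castelnuovo, $H$ is smooth at $P$ and $T\to H$ is the blowup of the reduced point $P$. This establishes smoothness of $X$ at the general point of $\Gamma$ and identifies $\sigma$ there with the blowup of $\Gamma$.

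Next I would globalise and deal with the finitely many special points. Away from these points the previous step shows $\sigma$ is the blowup of $\sI_{\Gamma/X}$; patching these descriptions and comparing $Y$ with $\Bl_{\sI_{\Gamma/X}}X$ (the two agree in codimension one over $X$ and, being extremal with $-K$ ample and $\rho=1$, must coincide) gives the isomorphism in $(3)$, while the fact that $E$ is a prime divisor dominating $\Gamma$ yields the reducedness and irreducibility in $(1)$. The local complete intersection property is then forced by the Gorenstein hypothesis on $Y$: the blowup of a non-lci curve in a smooth $3$-fold fails to be Gorenstein, so $(1)$ and $(2)$ hold in full. For the last two assertions I would take a general elephant $\widetilde S\in|-K_Y|$, which is Du Val since $Y$ is Gorenstein terminal; it pushes down to a general hypersurface section $S\supset\Gamma$. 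Since $\Bl_\Gamma X$ is smooth wherever $\Gamma$ is smooth, the singularities of $Y$ occur only over the singular points of $\Gamma$, and the local analysis of such a cDV point sitting on the Du Val surface $\widetilde S$ cut out inside the blowup of a curve pins the type of $Y$ down to $cA$, giving $(4)$; the minimal resolution of $\widetilde S$ then maps isomorphically onto $S$, so $S$ is smooth, giving $(5)$.

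The main obstacle is precisely this passage from the generic point to the special points of $\Gamma$: one must show that $X$ stays smooth and $\Gamma$ stays lci at the points where $\Gamma$ is singular or where $Y$ acquires a singularity, and one must rule out every cDV type on $Y$ other than $cA$. This is where the interplay between the adjunction computation on hyperplane sections, the Du Val condition on the general elephant, and the constraint $\rho(Y/X)=1$ has to be used most carefully; by comparison the remainder of the argument is formal Bertini and adjunction bookkeeping.
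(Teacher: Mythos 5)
The paper does not prove this statement at all---it is quoted verbatim from Cutkosky's article---so your attempt can only be judged on its own merits, and it has genuine gaps, both technical and structural.

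Technical first. Your appeal to Bertini is not valid: the linear system you use, pullbacks of hyperplanes through the chosen point $P$, has base locus exactly the fibre $F=\sigma^{-1}(P)$, and Bertini says nothing about smoothness of a general member along its base locus, which is precisely where you need it. (The fix is to take $H$ general in the full, base-point-free system of hyperplane sections, so that $T=\sigma^*H$ avoids the finitely many singular points of $Y$ and meets $\Gamma$ transversally in finitely many general points.) Second, the claim $\rho(T/H)=\rho(Y/X)=1$ is false in general: for a birational morphism of surfaces the classes of the contracted curves are linearly independent in $N_1(T/H)$, since their intersection matrix is negative definite, so $\rho(T/H)$ equals the total number of contracted components, which at this stage of the argument is unbounded. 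The correct route to irreducibility of the general fibre is the one this independence suggests: your adjunction computation shows every contracted irreducible curve is a $(-1)$-curve; two $(-1)$-curves in a negative-definite configuration are disjoint; and fibres are connected because $\sigma_*\sO_Y=\sO_X$. Hence each fibre over a general point is a single $(-1)$-curve, and Castelnuovo applies.

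The structural gap is larger: every assertion of the theorem lives at the special points of $\Gamma$, and there your proof is a statement of intent rather than an argument. Your identification of $Y$ with $\Bl_{\sI_{\Gamma/X}}X$ by ``agreement in codimension one'' is exactly what uniqueness (Proposition \ref{uniq}) does \emph{not} give: uniqueness identifies $Y$ with the blowup of the \emph{symbolic} power algebra $\bigoplus\sI^{[n]}$, and the symbolic and ordinary blowups coincide precisely when $\Gamma$ is lci---that is, statement (1), the thing to be proved. You then derive (1) from the claim that ``the blowup of a non-lci curve in a smooth $3$-fold fails to be Gorenstein,'' which is offered without proof and is essentially a restatement of the theorem; the examples in this paper (for instance the $\tfrac12(1,1,1)$ point in the Prokhorov--Reid extraction) confirm it in particular cases, but as consequences of explicit constructions, not as a general argument. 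Likewise (4) and (5) are settled by ``the local analysis \dots\ pins the type of $Y$ down to $cA$,'' which is no analysis at all; note also that the existence of a Du Val member of $|{-K_Y}|$ cut out near an entire (possibly reducible) fibre is the general elephant statement, which is a theorem of Koll\'ar--Mori only for irreducible central fibres, not something to invoke casually. You correctly flag the passage from generic to special points as the main obstacle, but that obstacle \emph{is} Cutkosky's theorem: what you have is a correct (after repair) treatment of the easy generic part plus an acknowledgement that the hard part remains.
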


Kawamata \cite{kawam} classifies the case when the point $P\in X$ is a terminal cyclic quotient singularity. In this case, there is a unique divisorial extraction given by a weighted blowup of the point $\Gamma=P$. In particular, if there exists a Mori extraction to a curve $\Gamma\subset X$, then $\Gamma$ cannot pass through any cyclic quotient point on $X$. 

Tziolas \cite{tz1,tz2,tz3,tz4} classifies terminal extractions when $P\in \Gamma\subset X$ is a smooth curve passing through a cDV point.

\subsection{The general elephant conjecture} \
\label{geconj}

Reid's general elephant conjecture states that, given a terminal contraction 
\[ \sigma\colon (E\subset Y) \to (\Gamma \subset X), \] 
then general anticanonical sections $T_Y \in|{-K}_Y|$ and $T_X=\sigma(T_Y) \in|{-K}_X|$ should have at worst Du Val singularities. Moreover, $\sigma\colon T_Y\to T_X$ should be a partial crepant resolution at $P\in T_X$.

This is proved by Koll\'ar and Mori \cite{km92} for \emph{extremal} neighbourhoods (i.e.\ ones where the central fibre $Z$ is irreducible). In almost all the examples constructed in this paper, $Z$ is reducible.

Note that the existence of a Du Val general elephant implies that the general hypersurface section $\Gamma\subset S\subset X$ is also Du Val. The construction of the divisorial extraction $\sigma\colon Y\to X$ (i.e.\ the equations and singularities of $Y$) depends upon the general section $S$ rather than the anticanonical section $T_X$. Therefore, through out this paper assume we are in the following local situation
\[P \in \Gamma \subset S \subset X \]
where $P\in\Gamma$ is a (non-lci) curve singularity, $P\in S$ is a general Du Val section and $(P\in X)\cong (0\in\Aa^3)$ is smooth.

\subsection{Uniqueness of contractions} \

The following Proposition appears in \cite{tz1} Proposition 1.2.
\begin{prop}
\label{uniq}
Suppose that $\sigma\colon Y\to X$ is a divisorial contraction that contracts a divisor $E$ to a curve $\Gamma$, that $X$ and $Y$ are normal and that $X$ has isolated singularities. Suppose further that $\sigma$ is the blowup over the generic point of $\Gamma$ in $X$ and that $-E$ is $\sigma$-ample. Then $\sigma\colon Y\to X$ are uniquely determined and isomorphic to 
\[ \Bl_\Gamma\colon\Proj_X \bigoplus_{n\geq 0} \sI^{[n]} \to X \]
where $\sI^{[n]}$ is the $n$th symbolic power of the ideal sheaf $\sI=\sI_{\Gamma/X}$, i.e.\ the blowup of the symbolic power algebra of $\sI$.
\end{prop}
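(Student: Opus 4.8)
The plan is to reconstruct $Y$ intrinsically from the pair $(X,\Gamma)$ by exploiting the relative ampleness of $-E$. Since $\sigma$ is projective and $-E$ is $\sigma$-ample, the standard relative-$\Proj$ reconstruction of a morphism from a relatively ample divisor gives a canonical isomorphism
\[ Y \cong \Proj_X \bigoplus_{n\geq0} \sigma_*\sO_Y(-nE). \]
Everything then reduces to identifying the graded sheaf of algebras $\bigoplus_n \sigma_*\sO_Y(-nE)$ with the symbolic power algebra $\bigoplus_n \sI^{[n]}$. Once this identification is made, both $Y$ and $\sigma$ are manifestly built out of $X$ and $\Gamma$ alone, so the uniqueness assertion follows immediately.

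First I would describe each graded piece valuatively. Because $X$ is normal and $\sigma$ is proper and birational, $\sigma_*\sO_Y=\sO_X$; hence for $n\geq0$ the inclusion $\sO_Y(-nE)\subset\sO_Y$ pushes forward to an inclusion $\sigma_*\sO_Y(-nE)\subset\sO_X$, and each term is genuinely an ideal sheaf of $\sO_X$. Writing $v_E$ for the divisorial valuation on $K(X)=K(Y)$ attached to the prime divisor $E$, normality of $Y$ gives that, for $n\geq0$, a rational function lies in $\sO_Y(-nE)$ exactly when $v_E(f)\geq n$ and $f$ is regular in codimension one elsewhere. Taking sections over $\sigma^{-1}U$ and using $\sigma_*\sO_Y=\sO_X$ yields
\[ \sigma_*\sO_Y(-nE) = \{\, f\in\sO_X : v_E(f)\geq n \,\}. \]

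It remains to identify this valuation ideal with the symbolic power, and here I would bring in the two unused hypotheses. Since $X$ has isolated singularities and $\Gamma$ is a curve, the generic point $\eta$ of $\Gamma$ is a smooth point of $X$, so $\sO_{X,\eta}$ is a two-dimensional regular local ring with maximal ideal $\frakm_\eta=\sI\sO_{X,\eta}$. The hypothesis that $\sigma$ is the blowup over the generic point of $\Gamma$ means that, after localising at $\eta$, $\sigma$ becomes the blowup of $\frakm_\eta$, whose exceptional divisor carries precisely the $\frakm_\eta$-adic order valuation; hence $v_E(f)=\operatorname{ord}_{\frakm_\eta}(f)$ for every $f\in\sO_{X,\eta}$. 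Because $\sO_{X,\eta}$ is regular, $\operatorname{ord}_{\frakm_\eta}(f)\geq n$ is equivalent to $f\in\frakm_\eta^n=\sI^n\sO_{X,\eta}$, so
\[ \{\, f\in\sO_X : v_E(f)\geq n \,\} = \sI^n\sO_{X,\eta}\cap\sO_X = \sI^{[n]}, \]
which is exactly the $n$th symbolic power. Reassembling the graded pieces gives the desired isomorphism with $\Bl_\Gamma X$.

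The relative-$\Proj$ reconstruction and the identity $\sigma_*\sO_Y=\sO_X$ are routine. The conceptual heart, and the step I expect to demand the most care, is the last one: showing that the globally defined valuation ideal $\{f: v_E(f)\geq n\}$ \emph{coincides} with $\sI^{[n]}$ rather than merely agreeing with it at $\eta$. The subtlety is that $\sI^n$, $\sI^{[n]}$ and $\sigma_*\sO_Y(-nE)$ may all differ at the finitely many special points of $\Gamma$—in particular where $\Gamma$ is singular or meets $\operatorname{Sing}X$. What rescues the argument is that both $\sigma_*\sO_Y(-nE)$ and $\sI^{[n]}$ are saturated by construction: each is cut out by a single order condition imposed at the generic point $\eta$ and then intersected back into $\sO_X$, so equality at $\eta$ forces equality everywhere. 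Making this saturation statement precise, and verifying that the generic-point hypothesis genuinely pins $v_E$ down as the order valuation (so that no competing divisor over $\Gamma$ can intervene), is where the real work lies.
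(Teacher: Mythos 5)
Your proposal is correct and takes essentially the same route as the paper: reconstruct $Y$ as $\Proj_X \bigoplus_{n\geq0}\sigma_*\sO_Y(-nE)$ using relative ampleness of $-E$, then identify $\sigma_*\sO_Y(-nE)$ with $\sI^{[n]}$ by checking at the generic point of $\Gamma$ and noting that both ideals are determined by their stalks there. Your valuative phrasing---pinning down $v_E$ as the $\frakm_\eta$-adic order valuation of the regular local ring $\sO_{X,\eta}$, which is where the isolated-singularities hypothesis enters---is simply a more explicit rendering of the paper's argument that the claim is clear at the generic point and that $\sO_X/\sigma_*\sO_Y(-nE)$ has no associated primes other than $\Gamma$, so the step you flag as ``where the real work lies'' is in fact already complete in what you wrote.
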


\begin{proof}
Pick a relatively ample Cartier divisor class $D$ on $Y$ which must be a rational multiple of $\sO_Y(-E)$. Then
\[ Y = \Proj_X R(Y,D) \] 
and, up to truncation, this is the ring $R(Y,-E)$. 

Now the result follows from the claim that $\sigma_*\sO_Y(-nE)$ is the $n$th symbolic power of $\sI$. This is clear at the generic point of $\Gamma$, since we assume it is just the blowup there. Now $\sigma_*\sO_Y = \sO_X$ is normal, and $\sO_Y(-nE)\subset \sO_Y$ is the ideal of functions vanishing $n$ times on $E$ outside of $\sigma^{-1}( P)$. So $\sO_X/\sigma_*\sO_Y(-nE)$ has no associated primes other than $\Gamma$ and this proves the claim.
\end{proof}

\begin{rmk}
Suppose $\sigma\colon Y\to X$ is a terminal divisorial contraction. By Mori's result, $Y$ is the blowup over the generic point of $\Gamma$ and we are in the setting of the theorem. Therefore a terminal contraction is unique if it exists, although there may be many more canonical contractions to the same curve.  
\end{rmk}

From this result, it is also easy to see that Cutkosky's result, Theorem \ref{cutthm}, holds for divisorial extractions, as well as contractions. 

\begin{lem}
Suppose that $\Gamma$ is a local complete intersection curve in a $3$-fold $X$ and that $X$ is smooth along $\Gamma$. Then a Mori extraction exists iff\/ $\Gamma$ is reduced, irreducible and a general hypersurface section $\Gamma\subset S$ is smooth. 
\end{lem}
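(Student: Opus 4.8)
The plan is to prove the two implications separately, reducing in each case to the single local model obtained by blowing up $\sI=\sI_{\Gamma/X}$ and using the uniqueness statement of Proposition \ref{uniq}.

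\emph{Necessity.} Suppose a Mori extraction $\sigma\colon Y\to X$ exists. Since $Y$ is terminal, the Remark following Proposition \ref{uniq} tells us that $\sigma$ is the blowup over the generic point of $\Gamma$, so it satisfies the hypotheses of that proposition and is isomorphic to the symbolic blowup $\Bl_\Gamma=\Proj_X\bigoplus_{n\geq 0}\sI^{[n]}$. The first step is to observe that $\Gamma$ must be irreducible: over the generic point of any irreducible component of $\Gamma$ the map $\sigma$ is the blowup of a codimension-$2$ lci inside the smooth locus of $X$, and so contributes an exceptional prime divisor (a $\PP^1$-bundle over that component); as the definition of a divisorial contraction requires a \emph{unique} exceptional prime divisor $E$, there can be only one component (and the curve is generically reduced). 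Once $\Gamma$ is reduced and irreducible, the classical identity $\sI^{[n]}=\sI^n$ for a complete-intersection prime shows that $\Bl_\Gamma$ is the ordinary blowup of $\sI$; since $X$ is smooth and $\Gamma$ is lci, this blowup is a Gorenstein $3$-fold. Now $Y$ is Gorenstein and terminal, so Cutkosky's Theorem \ref{cutthm} applies (this is the sense in which it holds for extractions), and its conclusions (1) and (5) give exactly that $\Gamma$ is reduced and irreducible and that a general hypersurface section $\Gamma\subset S$ is smooth.

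\emph{Sufficiency.} Conversely, assume $\Gamma$ is reduced, irreducible and lci in the smooth $3$-fold $X$, with general section $S$ smooth, and take $Y=\Proj_X\bigoplus_{n\geq 0}\sI^n$ to be the ordinary blowup. Because $\Gamma$ is irreducible the exceptional divisor $E$ is irreducible, giving $\rho(Y/X)=1$, and the tautological sheaf $\sO_Y(-E)=\sO_Y(1)$ is $\sigma$-ample; combined with the discrepancy computation $K_Y=\sigma^*K_X+E$ this shows $-K_Y$ is $\sigma$-ample. The essential point is to identify the singularities of $Y$ and check that they are terminal and $\QQ$-factorial, and it is here that I would use that $S$ is smooth: choosing analytic coordinates with $S=(z=0)$ and $\Gamma=(z,g(x,y))$, the relevant affine chart of the blowup is the hypersurface
\[ \big(zt=g(x,y)\big)\subset\Aa^4_{x,y,z,t}, \]
a $cA$ singularity. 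Since $\Gamma$ is reduced, $g$ is squarefree and this singularity is isolated, hence compound Du Val and therefore terminal; since $\Gamma$ is irreducible, $g$ is irreducible and the point is (analytically) $\QQ$-factorial. Thus $\sigma\colon Y\to X$ is a Mori extraction.

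I expect the main obstacle to be the sufficiency direction, specifically the verification that the chart $zt=g(x,y)$ is terminal and $\QQ$-factorial precisely under the three hypotheses: $S$ smooth is what puts the blowup into the $cA$ normal form, reducedness of $\Gamma$ (squarefreeness of $g$) is what makes the singularity isolated and hence terminal, and irreducibility of $\Gamma$ (irreducibility of $g$) is what yields $\QQ$-factoriality. A secondary point, needed before Cutkosky's theorem can be quoted in the necessity direction, is the passage from ``terminal extraction'' to ``Gorenstein blowup''; this rests on the identity of symbolic and ordinary powers for a complete-intersection prime, which fails once $\Gamma$ is reducible or non-reduced.
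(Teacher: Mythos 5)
Your sufficiency argument contains a genuine error at the $\QQ$-factoriality step. Irreducibility of $\Gamma$ is a global condition and does \emph{not} make $g$ irreducible in the analytic local ring at a singular point of $\Gamma$: for an irreducible nodal curve, say $\Gamma=(z=y^2-x^2(x+1)=0)\subset\Aa^3$, the function $g=y^2-x^2(x+1)$ splits into two smooth analytic branches at the node, so the chart $\big(zt=g(x,y)\big)$ has an ordinary double point there, and an ordinary double point is \emph{never} analytically $\QQ$-factorial (its analytic class group is $\ZZ$, generated by one of the two planes $z=g_1=0$). Such curves must be allowed by the lemma --- they are reduced, irreducible, lci, with smooth general section, and they occur in Cutkosky's Theorem \ref{cutthm} with exactly these $cA$ points on $Y$ --- so the criterion you are checking is the wrong one: if analytic $\QQ$-factoriality were required, the lemma would contradict Cutkosky. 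What is needed, and what is true, is $\QQ$-factoriality in the Zariski sense, and it follows from a \emph{global} argument rather than a local-analytic one: since $\Gamma$ is irreducible and lci, the exceptional divisor $E$ is irreducible and Cartier ($\sI\cdot\sO_Y=\sO_Y(-E)$ is invertible), $Y\setminus E\cong X\setminus\Gamma$ gives that every Weil divisor on $Y$ is linearly equivalent to $\sigma^*D'+nE$ with $D'$ a Weil divisor on the $\QQ$-factorial $X$, and both summands are $\QQ$-Cartier. Your closing summary ("irreducibility of $g$ is what yields $\QQ$-factoriality") is therefore exactly the wrong lesson: the points of $Y$ over nodes of $\Gamma$ are not analytically $\QQ$-factorial, yet the extraction exists.

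A secondary soft spot is the parenthetical "(and the curve is generically reduced)" in your necessity direction. Counting exceptional prime divisors gives only set-theoretic irreducibility of $\Gamma=\sigma(E)$ (which is automatic, being the image of an irreducible set) and says nothing about the lci scheme structure; yet reducedness is load-bearing for you, since $\sI^{[n]}=\sI^n$, Gorenstein-ness of the blowup, and hence the appeal to Cutkosky all require $\sI$ to be prime. The paper's own proof never invokes Cutkosky and is more economical: by Proposition \ref{uniq} the extraction is the blowup $Y=\{f\eta-g\xi=0\}\subset X\times\PP^1$, and if both $f,g\in\frakm_P^2$ then the defining equation lies in $\frakm_Q^2$ at every point $Q$ of the central fibre, so $Y$ is singular along a curve and not terminal; otherwise one gets at worst an isolated $cA$ point. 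The same one-line computation, run along $\Gamma$ instead of over $P$, is what rules out a generically non-reduced (or non-squarefree) structure, since then $Y$ is singular along a curve dominating $\Gamma$. So your detour through Cutkosky can be made to work, but only after reducedness is established by precisely the direct computation the paper uses, at which point Cutkosky is only supplying conclusion (5).
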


\begin{proof}
By Proposition \ref{uniq}, if a Mori extraction $\sigma\colon Y\to X$ exists then $\sigma$ is isomorphic to the blowup of the ideal sheaf $\sI_{\Gamma/X}$. As $\Gamma$ is lci then, locally at a point in $P\in\Gamma\subset X$, we have that $\Gamma$ is defined by two equations $f,g$. Hence $Y$ is given by
\[ Y = \{ f\eta - g\xi = 0 \} \subset X\times \PP^1_{(\eta:\xi)} \to X \]
If both $f,g\in\frakm^2$ then at any point $Q$ along the central fibre $Z=\sigma^{-1}(P)_{\text{red}}$ the equation defining $Y$ is contained in $\frakm_Q^2$. Therefore $Y$ is singular along $Z$ and hence not terminal. So at least one of $f,g$ is the equation of a smooth hypersurface, say $f\in\frakm\setminus \frakm^2$. Now $Y$ is smooth along $Z$ except for a possible $cA$ type singularity at the point $P_\xi\in Y$, where all variables except $\xi$ vanish.
\end{proof}

\subsection{Unprojection}\label{unproj} \

In this paper, divisorial contractions are constructed by Kustin--Miller unprojection. The general philosophy of unprojection is to start working explicitly with Gorenstein rings in low codimension, successively adjoining new variables with new equations. For more details on Type I unprojection and Tom \& Jerry, see e.g.\ \cite{par,bkr,kino}. 

All unprojections appearing in this paper are Gorenstein Type I unprojections. A point $Q\in Y$ on a $3$-fold is called a \emph{Type I centre} if we can factor the projection map $Q\in Y\dashrightarrow \Pi\subset Y'$ as 
\begin{center}
\begin{tikzpicture}
  \matrix (m) [matrix of math nodes,row sep=1em,column sep=2em,minimum width=2em]
  {
      & E\subset Z &  \\
     Q\in Y & & \Pi\subset Y' \\};
  \path[->] (m-1-2) edge node [above] {$\phi$} (m-2-1); 
  \path[->] (m-1-2) edge node [above] {$\psi$} (m-2-3); 
\end{tikzpicture}
\end{center}
where $\phi$ is a divisorial extraction from the point $Q\in Y$ with exceptional divisor $E\subset Z$, $\psi$ is a small birational anticanonical morphism mapping $E$ birationally to a divisor $\Pi\subset Y'$ such that both $Y'$ and $\Pi$ are projectively Gorenstein. The map $Y'\dashrightarrow Y$ is called the \emph{unprojection map}. The point is that under these conditions the Kustin--Miller unprojection of the divisor $\Pi\subset Y'$ (described in \cite{par} Theorem 1.5) reconstructs $Y$, so that $Y$ can be obtained by adjoining just one new variable $u$ to the graded ring defining $Y'$, with a systematic way of obtaining the equations involving $u$ (in practice it is usually easy to work them out by ad hoc methods).

Another important idea appearing in these calculations is the structure of Gorenstein rings in codimension 3. By a theorem of Buchsbaum and Eisenbud, the equations of such a ring can be written as the maximal Pfaffians of a skew-symmetric $(2k+1)\times (2k+1)$ matrix. In practice we can usually always take $5\times 5$ matrices.

Now suppose that $Y'$ is a $3$-fold in codimension 3, given by the maximal Pfaffians of a $5\times 5$ skew matrix $M$. \emph{Tom \& Jerry} are the names of two different restrictions on $M$ that are necessary for $Y'$ to contain a plane $\Pi$, defined by an ideal $I$. These are:
\begin{enumerate}
\item Tom$_i$---all entries of $M$ except the $i$th row and column belong to $I$,
\item Jer$_{ij}$---all entries of $M$ in the $i$th and $j$th rows and columns belong to $I$.
\end{enumerate}

The easiest way to understand all of this is to work through the example given in \S\ref{prokreid}, with a geometrical explanation given in Remark \ref{geom}.

\section{Curves in Du Val Singularities}

Let $\Gamma$ be a reduced and irreducible curve passing through a Du Val singularity $(P\in S)$. Consider $S$ as simultaneously being both the hypersurface singularity $0\in V(f)\subset \Aa^3$, as in Definition \ref{dvdef}(1), and the group quotient $\pi\colon\CC^2\to\CC^2/G$, as in Definition \ref{dvdef}(2). Write $S=\text{Spec }\sO_S$ where
\[ \sO_S = \sO_X/(f)=(\sO_{\CC^2})^G, \quad \sO_X=\CC[x,y,z], \quad \sO_{\CC^2}=\CC[u,v]. \]
The aim of this section is to describe the equations of $\Gamma\subset X\cong \Aa^3$ in terms of some data associated to the equation $f$ and the group $G$.

\subsection{A 1-dimensional representation of $G$} \

Consider $C := \pi^{-1} (\Gamma) \subset \CC^2$, the preimage of $\Gamma$ under the quotient map $\pi$. Then $C$ is a reduced (but possibly reducible) $G$-invariant curve giving a diagram
\begin{center}
\begin{tikzpicture}
  \matrix (m) [matrix of math nodes,row sep=1em,column sep=2em,minimum width=2em]
  {
     C & \CC^2_{u,v} \\
     \Gamma & S \\};
  \path[->] (m-1-1) edge (m-2-1); 
  \path[->] (m-1-2) edge node [right] {$\pi$} (m-2-2);
  \path[right hook->]  (m-1-1) edge (m-1-2);
  \path[right hook->]  (m-2-1) edge (m-2-2);
  \node [left=4pt] at (m-2-1) {$P\in$};
\end{tikzpicture}
\end{center}
As such, $C$ is defined by a single equation $V(\gamma)\subset\CC^2$ and this $\gamma(u,v)$ is called the \emph{orbifold equation} of $\Gamma$. As $C$ is $G$-invariant the equation $\gamma$ must be $G$-semi-invariant, so there is a 1-dimensional representation $\rho\colon G\to\CC^\times$ such that 
\[ {^g\gamma}(u,v) = \rho(g) \gamma(u,v), \quad\quad \forall g\in G. \]
Moreover, $\Gamma$ is a Cartier divisor (and hence lci in $X$) if and only if $\rho$ is the trivial representation. Let us restrict attention to nontrivial $\rho$.

As can be seen from Table \ref{dvtab}, there are $n$ such representations if $S$ is type $A_n$, three if type $D_n$, two if type $E_6$, one if type $E_7$ and none if type $E_8$. These possibilities are listed later on in Table \ref{mftab}.

\subsection{A matrix factorisation of $f$} \

As is well known from the McKay correspondence, the ring $\sO_{\CC^2}$ has a canonical decomposition as a direct sum of $\sO_S$-modules
\[ \sO_{\CC^2} = \bigoplus_{\rho\in \text{Irr}(G)} M_\rho \] 
where $M_\rho = V_\rho \otimes \text{Hom}(V_\rho, \sO_{\CC^2})^G$ and $\text{Irr}(G)$ is the set of irreducible $G$-representations $\rho\colon G\to\text{GL}(V_\rho)$. In particular if $\dim\rho=1$ then we see that $M_\rho$ is the unique irreducible summand of $\sO_{\CC^2}$ of $\rho$ semi-invariants
\[ M_\rho=\big\{h(u,v)\in\sO_{\CC^2} : {^gh}=\rho(g)h \big\}. \]
This is a rank 1 maximal Cohen-Macaulay $\sO_S$-module generated by two elements at $P$. 

As shown by Eisenbud \cite{eis}, such a module over the ring of a hypersurface singularity has a minimal free resolution which is 2-periodic, i.e.\ there is a resolution
\[ \begin{matrix} 
M_\rho & \leftarrow & \sO_S^{\oplus2} & \stackrel{\phi}{\longleftarrow} & \sO_S^{\oplus2} & \stackrel{\psi}{\longleftarrow} & \sO_S^{\oplus2} & \stackrel{\phi}{\longleftarrow} & \cdots
\end{matrix} \]
where $\phi$ and $\psi$ are matrices over $\sO_X$ satisfying 
\[ \phi\psi = \psi\phi = fI_2.\]
The pair of matrices $(\phi,\psi)$ is called a \emph{matrix factorisation} of $f$. In our case $\phi$ and $\psi$ are $2\times 2$ matrices. It is easy to see that $\det\phi=\det\psi=f$ and that $\psi$ is the adjugate matrix of $\phi$. Write $I(\phi)$ for the ideal of $\sO_X$ generated by the entries of $\phi$ (or equivalently $\psi$).

Write $\epsilon_k$ (resp.\ $\omega,i$) for a primitive $k$th (resp.\ 3rd, 4th) root of unity. In Table \ref{mftab} the possible representations $\rho$ of $G$ and the first matrix $\phi$ in a matrix factorisation of $M_\rho$, for some choice of $f$, are listed. These can be found, for instance, in \cite{kst} \S5.
\begin{table}[htdp]
\caption{1-dimensional representations of $G$} 
\label{mftab}
\begin{center}
\begin{tabular}{cccc}
Type & Presentation of $G$ & $\rho(r ), \big(\rho(s),\rho(t)\big)$ & $\phi$ \\ \hline \\
$\bA_{n}^j$ & $\left\langle r : r^{n+1}=e \right\rangle$ & $\epsilon_{n+1}^j$  & $\begin{pmatrix} x & y^j \\ y^{n+1-j} & z \end{pmatrix}$  \\ \\
$\bD_{n}^l$ & $\left\langle \begin{matrix} r,s,t : \\ r^{n-2}=s^2=t^2=rst \end{matrix} \right\rangle$ & $1,-1,-1$ &  $\begin{pmatrix} x & y^2 + z^{n-2} \\ z & x\end{pmatrix}$ \\  \\
$\bD_{2k}^r$ &   & $-1,1,-1$ & $\begin{pmatrix} x & yz + z^k \\ y & x\end{pmatrix}$ \\ \\
$\bD_{2k+1}^r$ &   & $-1,i,-i$ & $\begin{pmatrix} x & yz \\ y & x+z^k \end{pmatrix}$  \\ \\
$\bE_6$ & $\left\langle \begin{matrix} r,s,t : \\ r^2=s^3=t^3=rst \end{matrix} \right\rangle$ & $1,\omega,\omega^2$ & $\begin{pmatrix} x & y^2 \\ y & x + z^2 \end{pmatrix}$  \\ \\
$\bE_7$ & $\left\langle \begin{matrix} r,s,t : \\ r^2=s^3=t^4=rst \end{matrix} \right\rangle$ & $-1,1,-1$ & $\begin{pmatrix} x & y^2+z^3 \\ y & x \end{pmatrix}$  
\end{tabular}
\end{center}
\end{table}%

The notation $\bD_n^l$ refers to the case when $\rho$ is the 1-dimensional representation corresponding to the leftmost node in the $D_n$ Dynkin diagram (see Table \ref{dvtab}) and $\bD_n^r$ refers to one of the rightmost pair of nodes. Of course there are are actually two choices of representation we could take for each of the cases $\bD_{2k}^r,\bD_{2k+1}^r$ and $\bE_6$. However we treat each of them as only one case since there is an obvious symmetry of $S$ switching the two types of curve. Similarly for $\bA_n^j$ we can assume that $j\leq\tfrac{n+1}2$.

\subsection{Normal forms for $\Gamma\subset X$}\

\begin{lem} Suppose that we are given $P\in\Gamma\subset S\subset X$ as in \S\ref{geconj}. Let $\rho$ and $\phi$ be the representation of $G$ and matrix factorisation of $f$ associated to $\Gamma$. Then
\begin{enumerate}
\item  the equations of $\Gamma\subset X$ are given by the minors of a $2\times 3$ matrix
\[ \bigwedge^2 \begin{pmatrix} \: \phi & \begin{matrix} g \\ h \end{matrix} \end{pmatrix} = 0 \]
for some functions $g,h\in\sO_X$.
\item Suppose furthermore that $S$ is a \emph{general} Du Val section containing $\Gamma$. Then $g,h\in I(\phi)$.
\end{enumerate}
\label{Lem1}
\end{lem}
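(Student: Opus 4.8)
The plan is to deduce (1) from the Hilbert--Burch theorem combined with the matrix factorisation of $f$, and to reduce (2) to the vanishing of an extension class, which is exactly where the generality of $S$ must enter.

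\textbf{Part (1).} First I would note that since $\Gamma$ is reduced it is Cohen--Macaulay of codimension $2$ in the smooth $3$-fold $X$, so $\sI_{\Gamma/X}$ has a free resolution of length $1$ and, by Hilbert--Burch, is the ideal of maximal minors of its syzygy matrix. The non-lci hypothesis forces $\mu(\sI_{\Gamma/X})=3$: indeed $\sI_{\Gamma/S}\cong M_{\rho^{-1}}$ (via multiplication by the orbifold equation $\gamma$) needs two generators, and $f$ adds a third, while being non-lci rules out two. Hence the syzygy matrix is $2\times 3$. The real content is to recognise one $2\times 2$ block as $\phi$. For this, recall that $\sI_{\Gamma/S}\cong M_{\rho^{-1}}$ is a rank-$1$ MCM $\sO_S$-module whose matrix factorisation is, up to transpose (which leaves the entry ideal $I(\phi)$ unchanged), the matrix $\phi$ of Table \ref{mftab}; by Eisenbud's periodicity it is $\operatorname{coker}\phi$, and viewed over $\sO_X$ (where $\operatorname{pd}=1$ by Auslander--Buchsbaum) it has resolution $0\to\sO_X^{2}\xrightarrow{\phi}\sO_X^{2}\to\sI_{\Gamma/S}\to 0$. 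I would then feed the exact sequence $0\to(f)\to\sI_{\Gamma/X}\to\sI_{\Gamma/S}\to 0$ (valid because $f\in\sI_{\Gamma/X}$ and $\sO_S=\sO_X/(f)$) into a mapping cone with the free resolution of $(f)\cong\sO_X$, obtaining a resolution $0\to\sO_X^{2}\xrightarrow{\Phi}\sO_X^{3}\to\sI_{\Gamma/X}\to 0$ whose syzygy matrix $\Phi$ has $\phi$ as its lower $2\times 2$ block and an extra row $(g,h)$ on top. Hilbert--Burch then identifies $\sI_{\Gamma/X}$ with the maximal minors of $\Phi$, that is (after transposing) with the minors of the $2\times 3$ matrix $\bigl(\,\phi\mid(g,h)^{T}\bigr)$, which is exactly (1).

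\textbf{Part (2).} The row $(g,h)$ produced by the mapping cone is canonically the class of the extension $0\to(f)\to\sI_{\Gamma/X}\to\sI_{\Gamma/S}\to 0$ in $\operatorname{Ext}^1_{\sO_X}(\sI_{\Gamma/S},\sO_X)=\operatorname{coker}\phi^{T}$, so it is well-defined modulo the image of $\phi^{T}$; since every entry of $\phi^{T}$ lies in $I(\phi)$, the truth of the assertion $g,h\in I(\phi)$ depends only on the class of $(g,h)$ in the finite-length module $\operatorname{coker}\phi^{T}\otimes_{\sO_X}\sO_X/I(\phi)$, so the statement is at least well-posed. Writing $\tilde m_1,\tilde m_2$ for lifts of the two generators of $\sI_{\Gamma/S}$, lifting the relations $\phi$ imposes gives $\phi^{T}(\tilde m_1,\tilde m_2)^{T}=f\,(g,h)^{T}$. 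The generators $\tilde m_i$ are themselves among the non-$f$ minors, so they lie in $I(\phi)$ automatically; but this only yields $fg,fh\in I(\phi)$, which is vacuous because $f=\det\phi\in I(\phi)^2$. Thus part (2) genuinely needs the hypothesis that $S$ is a \emph{general} section: the generality must be what kills the class $(g,h)$ modulo $I(\phi)$.

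\textbf{The main obstacle} is exactly this genericity step. The difficulty is that as $S=V(f)$ varies over $\sI_{\Gamma/X}$ the ring $\sO_S$, the matrix $\phi$, and the target $\operatorname{coker}\phi^{T}$ all vary at once, so one cannot naively compare extension classes across different sections. The way I would attack it is to fix coordinates adapting $f$ to its normal form from Table \ref{mftab}, translate $g,h\in I(\phi)$ into a finite list of vanishing conditions on the low-order terms of $g$ and $h$ (for instance in the $E_6$ case $I(\phi)=(x,y,z^{2})$, so the condition is the vanishing of the $z$-linear coefficients of $g$ and $h$), and then show that a general $f\in\sI_{\Gamma/X}$ forces these conditions --- the geometric content being that the mildest Du Val singularity compatible with $\Gamma$ makes $I(\phi)$ as large as possible, whereas a class $(g,h)\notin I(\phi)$ would reflect an excess tangency of the section along $\Gamma$ that a general member does not have. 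I expect this to be the genuinely case-dependent part of the proof, and the one the subsequent sections exploit type by type.
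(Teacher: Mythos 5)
Your Part (1) is correct and is in substance the paper's own argument in different packaging: the paper resolves $\sO_\Gamma$ by decomposing the resolution of $\sO_C=\sO_{\CC^2}/(\gamma)$ into its McKay summands, obtaining $\sO_\Gamma \leftarrow \sO_S \leftarrow M_{\rho'} \leftarrow 0$, splicing in the $2$-periodic resolution of $M_{\rho'}$ given by the matrix factorisation, and lifting to $\sO_X$; your route via the identification $\sI_{\Gamma/S}\cong M_{\rho^{-1}}$, the extension $0\to(f)\to\sI_{\Gamma/X}\to\sI_{\Gamma/S}\to 0$, the horseshoe construction and Hilbert--Burch produces exactly the same $2\times 3$ presentation, and your count $\mu(\sI_{\Gamma/X})=3$ is a legitimate substitute for the paper's bookkeeping.

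Part (2), however, has a genuine gap: you never prove it. You correctly observe that the claim $g,h\in I(\phi)$ is well-posed modulo column operations, you correctly show the naive lifting computation yields nothing, and then you stop at ``the generality must be what kills the class,'' proposing a case-by-case attack (normal forms, vanishing of low-order coefficients, an excess-tangency heuristic) that is not carried out and that you yourself defer to later sections. The missing idea is precisely the one the paper set up in advance: characterisation (5) of Definition \ref{dvdef}, that a Du Val singularity is a \emph{simple} hypersurface singularity, i.e.\ only finitely many ideals $I\subset\frakm$ satisfy $f\in I^2$. The paper applies this to the whole net of sections $h_{\lambda,\mu}=\eta+\lambda\xi_1+\mu\xi_2$ through $\Gamma$: the general member is Du Val by hypothesis, hence simple, so only finitely many ideals $I$ can satisfy $h_{\lambda,\mu}\in I^2$ generically; since $\eta$ was chosen \emph{general} among sections through $\Gamma$ and $\eta=\det\phi\in I(\phi)^2$, this finiteness forces $h_{\lambda,\mu}\in I(\phi)^2$ for general $(\lambda,\mu)$; subtracting $\eta$ and using two independent values of $(\lambda,\mu)$ gives $\xi_1,\xi_2\in I(\phi)^2$, from which $g,h\in I(\phi)$ follows. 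This argument is short and uniform in the ADE type, so your expectation that the genericity step is ``genuinely case-dependent'' is exactly wrong: the simplicity characterisation is the tool that converts ``general section'' into a finiteness statement once and for all, and it is the reason that definition is included in Definition \ref{dvdef} at all. (The genuinely case-dependent analysis in the paper only begins afterwards, in \S\S3--4, on top of this lemma.)
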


\begin{proof}
Suppose that $\rho$ is a 1-dimensional representation of $G$. Note that if $(\psi,\phi)$ is a matrix factorisation for $M_\rho$, the $\sO_S$-module of $\rho$ semi-invariants, then $(\phi,\psi)$ is a matrix factorisation for $M_{\rho'}$, where $\rho'$ is the representation $\rho'(g)=\rho(g)^{-1}$.

The resolution of the $\sO_{\CC^2}$-module $\sO_C = \sO_{\CC^2}/(\gamma)$
\[ \begin{matrix} 
\sO_C & \leftarrow & \sO_{\CC^2} & \stackrel{\gamma}{\longleftarrow} & \sO_{\CC^2} & \leftarrow & 0
\end{matrix} \]
decomposes as a resolution over $\sO_S$ to give a resolution of $\sO_\Gamma$
\[ \begin{matrix} 
\sO_\Gamma & \leftarrow & \sO_S & \stackrel{\gamma}{\longleftarrow} & M_{\rho'} & \leftarrow & 0.
\end{matrix} \]
Using the resolution of $M_{\rho'}$ we get 
\[ \begin{matrix} 
\sO_\Gamma & \leftarrow & \sO_S & \xleftarrow{(\xi_2 \: -\xi_1)} & \sO_S^{\oplus2} & \stackrel{\phi}{\longleftarrow} & \sO_S^{\oplus2} & \stackrel{\psi}{\longleftarrow} & \cdots,
\end{matrix} \]
where $\xi_1,\xi_2$ are the two equations defining $\Gamma\subset S$. Now write $\gamma=g\alpha + h\beta$ where $\alpha,\beta$ are the two generators of $M_{\rho}$. We can use the resolution of $\sO_S$ as an $\sO_X$-module to lift this to a complex over $\sO_X$ and strip off the initial exact part to get the resolution
\[ \begin{matrix} 
\sO_\Gamma & \leftarrow & \sO_X & \xleftarrow{(\xi_2 \: -\xi_1 \: \eta)} & \sO_X^{\oplus3} & 
\xleftarrow{ \tiny \begin{pmatrix} \phi \\ \: g \:\: h \: \end{pmatrix}} & \sO_X^{\oplus2} & \leftarrow & 0
\end{matrix} \]
(possibly modulo some unimportant minus signs). Therefore the equations of the curve $\Gamma\subset S\subset X$ are given as claimed in (1).

To prove Lemma \ref{Lem1}(2), recall the characterisation of Du Val singularities in Definition \ref{dvdef}(5) as simple singularities. Let $\eta=\det{\phi}$ and $\xi_1,\xi_2$ be the three equations of $\Gamma$. We have a $\CC^2$-family of hypersurface sections through $\Gamma$ given by
\[  H_{\lambda,\mu} = \big\{ h_{\lambda,\mu} := \eta+\lambda \xi_1 +\mu \xi_2=0 \big\}_{(\lambda,\mu)\in\CC^2} \]
and we are assuming that $\eta$ is general. As the general member $H_{\lambda,\mu}$ is Du Val there are a finite number of ideals $I\subset\frakm$ such that the general $h_{\lambda,\mu}\in I^2$. As the general section $\eta$ satisfies $\eta\in I(\phi)^2$ we have that $h_{\lambda,\mu}\in I(\phi)^2$ for general $\lambda,\mu$. Therefore $g,h\in I(\phi)$.
\end{proof}

\begin{rmk}
Whilst Lemma $\ref{Lem1}$ gives a necessary condition, $g,h\in I(\phi)$, for a general section of a curve $\Gamma$ to be of the same type as $S$, it is not normally a sufficient condition.
\end{rmk}

\subsection{The first unprojection}\
\label{1st_unproj}

Now $\Gamma$ is defined as the minors of a $2\times3$ matrix, where all the entries belong to an ideal $I(\phi)\subset\sO_X$. Cramer's rule tells us that this matrix annihilates the vector of the equations of $\Gamma$
\[ \begin{pmatrix} \: \phi & \begin{matrix} g \\ h \end{matrix} \end{pmatrix} \begin{pmatrix} \xi_2 \\ -\xi_1 \\ \eta\end{pmatrix} = 0. \]
Multiplying out these two matrices gives us two syzygies holding between the equations of $\Gamma$ and these syzygies define a variety 
\[ \sigma' \colon Y'\subset X\times \PP^2_{(\eta:\xi_1:\xi_2)} \to X. \] 
$Y'$ is the blowup of the (ordinary) power algebra $\bigoplus_{n\geq 0} \sI^n$ of the ideal $\sI=\sI_{\Gamma/X}$. 

$Y'$ cannot be the divisorial extraction of Theorem \ref{uniq} since the fibre above $P\in X$ is not 1-dimensional. Indeed $Y'$ contains a Weil divisor $\Pi=\sigma'^{-1}(P)_{\text{red}}\cong \PP^2$, possibly with a non reduced structure, defined by the ideal $I(\phi)$. Our aim is to construct the divisorial extraction by birationally contracting $\Pi$. This is done by unprojecting $I(\phi)$ and repeating this process for any other divisors that appear in the central fibre.

\begin{lem}
Suppose there exists a Mori extraction $\sigma\colon (E\subset Y) \to (\Gamma\subset X)$. Then at least one of $g,h$ is not in $\frakm\cdot I(\phi)$.
\label{Lem2}
\end{lem}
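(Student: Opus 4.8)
The plan is to argue the contrapositive: assuming $g,h\in\frakm\cdot I(\phi)$, I will show that the candidate extraction constructed in \S\ref{1st_unproj} cannot be terminal, in the same spirit as the lci lemma proved above. By Proposition \ref{uniq} a Mori extraction $\sigma\colon Y\to X$, if it exists, is unique, and it is obtained from $Y'$ by unprojecting the plane $\Pi\cong\PP^2$ cut out by $I(\phi)$ (together with whatever further unprojections the construction requires). It therefore suffices to exhibit a non-terminal singularity on this $Y$, which by the local nature of terminality I may detect already after the first unprojection, provided the later unprojections are local isomorphisms near the offending point.

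First I would localise at the point $\tilde Q\in\Pi$ with $(\eta:\xi_1:\xi_2)=(1:0:0)$, the analogue of the point $P_\xi$ in the previous lemma, where the two ``smooth'' fibre coordinates $\xi_1,\xi_2$ vanish. Writing $\phi=\begin{pmatrix}a&b\\ c&d\end{pmatrix}$ and working in the chart $\eta=1$, the $3$-fold $Y'$ is cut out by $E_1=a\xi_2-b\xi_1+g$ and $E_2=c\xi_2-d\xi_1+h$, and the maximal ideal at $\tilde Q$ is $\frakm_{\tilde Q}=(x,y,z,\xi_1,\xi_2)$. Since every entry of $\phi$ lies in $\frakm$, each of the products $a\xi_2,b\xi_1,c\xi_2,d\xi_1$ lies in $\frakm\cdot\frakm_{\tilde Q}\subseteq\frakm_{\tilde Q}^2$; and the hypothesis gives $g,h\in\frakm\cdot I(\phi)\subseteq\frakm^2\subseteq\frakm_{\tilde Q}^2$. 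Hence $E_1,E_2\in\frakm_{\tilde Q}^2$, so $Y'$ is singular at $\tilde Q$ with both defining equations of multiplicity at least two. Equivalently, the linear-part map $\Phi\colon\sO_\Pi^{\oplus3}\to\sO_{\PP^2}(1)^{\oplus2}$ assembled from the $x,y,z$-derivatives of $E_1,E_2$ (its generic rank-two locus being where $Y'$ is smooth along $\Pi$ and $N_{\Pi/Y'}=\sO_{\PP^2}(-2)$ contracts $\Pi$ to the terminal cone $\tfrac12(1,1,1)$) degenerates \emph{all the way to rank zero} at $\tilde Q$, precisely because the linear parts $g_1,h_1$ of $g,h$ vanish once $g,h\in\frakm I(\phi)$.

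The key step is then to transport this rank-zero degeneracy through the unprojection. I would write the first unprojection explicitly, adjoining $u$ with the relations $u\cdot(\text{entries of }\phi)=(\dots)$ dictated by the two syzygies, and compute the local equations of $Y$ at the image point $Q=\psi(\Pi)$ in the coordinates $x,y,z,u$. The claim is that the collapse of $N_{\Pi/Y'}$ at $\tilde Q$ forces $Q$ to be strictly worse than $\tfrac12(1,1,1)$: concretely, the contracted cone is at best a canonical, non-terminal singularity (morally a $\tfrac1k(1,1,1)$ with $k\geq3$, or worse), rather than the terminal point obtained in the complementary case where one of $g_1,h_1$ is nonzero. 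Since terminal $3$-fold singularities are isolated and, in the Gorenstein case, cDV of multiplicity two, identifying $Q$ as non-terminal yields the desired contradiction, and the sharpness of the dichotomy (the good case really does give a terminal point) explains why the condition is stated as it is.

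The main obstacle is exactly this last identification: carrying out the unprojection uniformly across the Du Val types in Table \ref{mftab} and proving rigorously that the resulting point $Q$ violates the terminal condition, rather than merely being singular in a way some small modification might repair. Two safeguards are available to close this gap. The first is a direct multiplicity or tangent-cone count on the explicit unprojection equations, showing $\operatorname{mult}_Q Y$ is too large for a terminal point. The second, matching the philosophy of \S\ref{geconj}, is to track the general Du Val section $S=\{\det\phi=0\}$: a terminal $Y$ would force the proper transform $S_Y\to S$ to be a crepant partial resolution, whereas the rank-zero degeneracy at $\tilde Q$ produces an over-blown, non-crepant section, again ruling out terminality.
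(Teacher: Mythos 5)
Your opening computation is sound and in fact reproduces the heart of the paper's argument: in the chart $\{\eta=1\}$ every defining equation of the blowup lies in $\frakm_{\tilde Q}^2$ once $g,h\in\frakm\cdot I(\phi)$, because the entries of $\phi$ lie in $\frakm$ and $\xi_1,\xi_2$ vanish at $\tilde Q$. But your key step --- transporting this degeneracy to the unprojection point $Q=\psi(\Pi)$, the point to which $\Pi$ is contracted, and claiming $Q$ becomes ``strictly worse than $\tfrac12(1,1,1)$'' --- is false, and the paper's own Prokhorov--Reid example (\S\ref{prokreid}) is a counterexample. There $g=cy+dz$, $h=ax+by$, and the hypothesis $g,h\in\frakm\cdot I(\phi)=\frakm^2$ means exactly $a,b,c,d\in\frakm$. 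The unprojection equations $x\zeta=\cdots$, $y\zeta=\cdots$, $z\zeta=\cdots$ still let one eliminate $x,y,z$ in the chart $\{\zeta=1\}$ (their right-hand sides lie in $(\xi_1,\xi_2,\eta)^2$, so the formal implicit function theorem applies and no invertibility of $a,b,c,d$ is needed), so the unprojection point is the quotient singularity $\tfrac12(1,1,1)$ --- terminal --- no matter how degenerate $g,h$ are. What the hypothesis actually destroys is a different point: the three nodes of $Y'$ on $\Pi$ collide at your $\tilde Q=(1{:}0{:}0)$, and the non-terminal point of $Y$ is $Q_L=L\cap\{\zeta=0\}$, the point of the central fibre lying \emph{over} $\tilde Q$ in the chart $\{\eta\neq 0\}$, away from the contraction of $\Pi$ (the paper says precisely this at the end of \S\ref{prokreid}). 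Your normal-bundle heuristic misleads you: after the small modification the contracted plane still has normal bundle $\sO(-2)$; the rank-zero collapse at $\tilde Q$ is absorbed into the singularities of $Y$ along the modified curve over $\tilde Q$, not into the cone point.

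The repair is to stay over $\tilde Q$ rather than follow $\Pi$, which is exactly what the paper does. Since a Mori extraction, if it exists, is the symbolic blowup (Proposition \ref{uniq}), one works directly on $Y$ at the point $Q_\eta$ where all generators except $\eta$ vanish. The hypothesis gives $\eta\in I(\phi)^2$ and $\xi_1,\xi_2\in\frakm\cdot I(\phi)^2$, whence every equation of $Y$ at $Q_\eta$ lies in $\frakm_{Q_\eta}^2$, so $x,y,z,\xi_1,\xi_2$ are independent in $\frakm_{Q_\eta}/\frakm_{Q_\eta}^2$ and $\dim T_{Q_\eta}Y\geq 5$. Since $\eta$ is a unit at $Q_\eta$, the exceptional divisor $E$ is Cartier there, so $Q_\eta$ is a Gorenstein point; a terminal Gorenstein $3$-fold point is cDV, hence a hypersurface singularity of embedding dimension $\leq 4$ --- contradiction. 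Two smaller inaccuracies in your write-up: the condition $g,h\in\frakm\cdot I(\phi)$ is not equivalent to vanishing of the linear parts $g_1,h_1$ except when $I(\phi)=\frakm$ (type $A_1$; for types $D$ and $E$ the ideal $I(\phi)$ is strictly smaller than $\frakm$, and Remark \ref{msqu} already forces $g_1=h_1=0$ there); and your second ``safeguard'' runs the general elephant implication in the conjectural direction, so it cannot be used to close the gap.
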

\begin{proof}
Suppose that both $g,h \in \frakm\cdot I(\phi)$. Then the three equations of $\Gamma$ satisfy $\eta\in I(\phi)^2$ and $\xi_1,\xi_2\in\frakm\cdot I(\phi)^2$. On the variety $Y$ there is a point $Q=Q_\eta\in Y$ in the fibre above $P$ where all variables except $\eta$ vanish. Now $x,y,z,\xi_1,\xi_2$ are all linearly independent elements of the Zariski tangent space $T_QY=(\frakm_Q/\frakm_Q^2)^\vee$. This $Q\in Y$ is a Gorenstein point with $\dim T_QY\geq5$, so $Q\in Y$ cannot be cDV and is therefore not terminal.
\end{proof}

This condition gives an upper bound on the multiplicity of $\Gamma$ at $P\in X$.

\section{Divisorial Extractions from Singular Curves: Type $A$}

In the absence of any kind of structure theorem for the general $\bA_n^j$ case, I give some examples to give a flavour of the kind of behaviour that occurs. As seen in other problems, for instance Mori's study of Type $A$ flips or the Type $A$ case of Tziolas' classification \cite{tz3}, this will be a big class of examples with lots of interesting and complicated behaviour. These varieties are described as serial unprojections, existing in arbitrarily large codimension, and look very similar to Brown and Reid's Diptych varieties \cite{dip} also constructed by serial unprojection.

The general strategy is to use Lemma \ref{Lem1} to write down the equations of the curve $\Gamma\subset X$, possibly using Lemma \ref{Lem2} and some extra tricks to place further restrictions on the functions $g,h$. Then, as described in \S\ref{1st_unproj}, we can take the unprojection plane $\Pi\subset Y'$ as our starting point and repeatedly unproject until we obtain a variety $\sigma\colon Y\to X$ with a small (i.e.\ 1-dimensional) fibre above $P$. This is the unique extraction described by Theorem \ref{uniq} so checking the singularities of $Y$ will establish the existence of a terminal extraction.

\subsection{Prokhorov and Reid's example} \
\label{prokreid}

I run through the easiest case in detail as an introduction to how these calculations work. This example first appeared in \cite{pr} Theorem 3.3 and a similar example appears in Takagi \cite{tak} Proposition 7.1.

Suppose that a general section $P\in\Gamma\subset S\subset X$ is of type $A_1$ (i.e.\ the case $\bA_1^1$ in the notation of Table \ref{mftab}). By Lemma \ref{Lem1} we are considering a curve $\Gamma\subset S\subset X$ given by the equations
\[ \bigwedge^2 \begin{pmatrix}
x & y & -g(y,z) \\
y & z & h(x,y)
\end{pmatrix} = 0 \]
where the minus sign is chosen for convenience and we can use column operations to eliminate any occurrence of $x$ (resp.\ $z$) from $g$ (resp.\ $h$). Moreover $g,h\in I(\phi)=\frakm$ so we can write $g=cy+dz$ and $h=ax+by$ for some choice of functions $a,b,c,d\in\sO_X$.

By Lemma \ref{Lem2} at least one of $a,b,c,d\not\in\frakm$ else the divisorial extraction is not terminal. This implies that $\Gamma$ has multiplicity three at $P$. If we consider $S$ as the quotient $\CC^2_{u,v}/\ZZ_2$, where $x,y,z=u^2,uv,v^2$, then $\Gamma$ is given by the orbifold equation
\[ \gamma(u,v) = au^3+bu^2v+cuv^2+dv^3   \]
and the tangent directions to the branches of $\Gamma$ at $P$ correspond to the three roots of this equation.

Recall \emph{Cramer's rule} in linear algebra: that any $n\times(n+1)$ matrix annihilates the associated vector of $n\times n$ minors. 

In our case this gives two syzygies between the equations of $\Gamma\subset X$
\begin{equation} \begin{pmatrix}
x & y & -(cy + dz) \\
y & z & ax + by 
\end{pmatrix} \begin{pmatrix}
\xi_2 \\ -\xi_1 \\ \eta
\end{pmatrix} = 0
\tag{$*$}\label{eqns}\end{equation}
where $\eta=xz-y^2$ is the equation of $S$ and $\xi_1,\xi_2$ are the other two equations of $\Gamma$. We can write down a codimension 2 variety 
\[ \sigma' \colon Y'\subset X\times\PP^2_{(\xi_1:\xi_2:\eta)} \to X\]
where $\sigma'$ is the natural map given by substituting the equations of $\Gamma$ back in for $\xi_1,\xi_2,\eta$. Outside of $P$ this map $\sigma'$ is isomorphic to the blowup of $\Gamma$, in fact $Y'$ is the blowup of the \emph{ordinary power algebra} $\bigoplus \sI^n$. However $Y'$ cannot be the unique divisorial extraction described in Theorem \ref{uniq} since the fibre over the point $P$ is not small. Indeed, $Y'$ contains the plane $\Pi:=\sigma'^{-1}(P )_{\text{red}}\cong \PP^2$.

We can unproject $\Pi$ by rewriting the equations of $Y'$ \eqref{eqns} so that they annihilate the ideal $(x,y,z)$ defining $\Pi$,
\[ \begin{pmatrix}
\xi_2 & \xi_1 + c\eta & -d\eta \\  
-a\eta & \xi_2 + b\eta & \xi_1 \\
\end{pmatrix} \begin{pmatrix}
x \\ -y \\ z \end{pmatrix} = 0. \]
By using Cramer's rule again, we see that $Y'$ has some nodal singularities along $\Pi$ where $x,y,z$ and the minors of this new $2\times 3$ matrix all vanish. If the roots of $\gamma$ are distinct then this locus consists of three ordinary nodal singularities along $\Pi$. If $\gamma$ acquires a double (resp.\ triple) root then two (resp.\ three) of these nodes combine to give a slightly worse nodal singularity. 

We can resolve these nodes by introducing a new variable $\zeta$ that acts as a ratio between these two vectors, i.e.\ $\zeta$ should be a degree 2 variable satisfying the three equations 
\begin{align*} x\zeta &= \xi_1(\xi_1+c\eta)+d(\xi_2+b\eta)\eta, \\
y\zeta &= \xi_1\xi_2 - ad\eta^2, \\
z\zeta &= \xi_2(\xi_2+b\eta) + a(\xi_1+c\eta)\eta. \end{align*}
This all gives a codimension 3 variety $\sigma\colon Y \subset X\times \PP(1,1,1,2) \to X$ defined by five equations. As described in \S\ref{unproj}, by the Buchsbaum--Eisenbud theorem we can write these equations neatly as the maximal Pfaffians of the skew-symmetric $5\times 5$ matrix
\[ \begin{pmatrix}
\zeta & \xi_2 & \xi_1 + c\eta & -d\eta \\  
& -a\eta & \xi_2 + b\eta & \xi_1 \\
& & z & y \\
& & & x
\end{pmatrix} \]
(where the diagonal of zeroes and antisymmetry are omitted for brevity).

Now we can check that $Y$ actually is the divisorial extraction from $\Gamma$. Outside of the central fibre $Y$ is still the blowup of $\Gamma$, since 
\[ Y\setminus \sigma^{-1}(P)\cong Y'\setminus \sigma'^{-1}(P). \] 
The plane $\Pi\subset Y'$ is contracted to the coordinate point $Q_\zeta\in Y$ where all variables except $\zeta$ vanish. ($Q_\zeta$ is called the \emph{unprojection point} of $Y$ since the map $Y\dashrightarrow Y'$ is projection from $Q_\zeta$.) The central fibre is the union of (at most) three lines, all meeting at $Q_\zeta\in Y$. Therefore $\sigma$ is small and, by Theorem \ref{uniq} on the uniqueness of contractions, this has to be the divisorial extraction from $\Gamma$.

Furthermore we can check that $Y$ is terminal. First consider an open neighbourhood of the unprojection point $(Q_\zeta\in U_\zeta):=\{\zeta=1\}$. We can eliminate the variables $x,y,z$ to see that this open set is isomorphic to the cyclic quotient singularity
\[ (Q_\zeta\in U_\zeta)\cong (0\in \CC^3_{\xi_1,\xi_2,\eta})/\tfrac12(1,1,1). \]
Now for each line $L\subseteq \sigma^{-1}(P)_{\text{red}}$ we are left to check the point $Q_L=L\cap\{\zeta=0\}$. Note that each of these points lies in the affine open set $U_\eta=\{\eta=1\}$ and recall that at least one of the coefficients $a,b,c,d$ is a unit. After a possible change of variables, we may assume $a\notin\frakm$. We can use the equations involving $a$ above to eliminate $x$ and $\xi_1$. After rewriting $\zeta=a\zeta', \xi_2=a\xi'_2$, we are left with the equation of a hypersurface
\[ \big((y - z\xi'_2)\zeta' + a{\xi'_2}^3 + b{\xi'_2}^2 + c\xi'_2 + d = 0\big) \subset \Aa^4_{y,z,\xi'_2,\zeta'} \]
which is smooth (resp.\ $cA_1,cA_2$) at $Q_L$ if $L$ is the line over a node corresponding to a unique (resp.\ double, triple) root of $\gamma$. 

If we consider the case where all of $a,b,c,d\in\frakm$ then the central fibre consists of just one line $L$ and the point $Q_L\in Y$ is not terminal (the matrix defining $Y$ has rank 0 at this point, so it cannot be a hyperquotient) which agrees with Lemma \ref{Lem2}.

\begin{rmk}\label{geom}
The following construction, originally due to Hironaka, illustrates how the unprojection of $\Pi$ works geometrically.

Consider the variety $X'$ obtained by the blowup of $P\in X$ followed by the blowup of the birational transform of $\Gamma$. The exceptional locus has two components $\Pi_{X'}$ and $E_{X'}$ dominating $P$ and $\Gamma$ respectively. Assuming the tangent directions of the branches of $\Gamma$ at $P$ are distinct then $\Pi_{X'}$ is a Del Pezzo surface of degree 6. Consider the three $-1$-curves of $\Pi_{X'}$ that don't lie in the intersection $\Pi_{X'}\cap E_{X'}$. They have normal bundle $\sO_{X'}(-1,-1)$ so we can flop them. The variety $Y'$, constructed above, is the midpoint of this flop and we end up with the following diagram,
\begin{center}
\begin{tikzpicture}
  \node at (0,0) {$X$};
  \node at (1,1) {$X'$};
  \node at (2,0) {$Y'$};
  \node at (3,1) {$Z$};
  \node at (4,0) {$Y$};
  
  \path[->] (.75,.75) edge (.25,.25); 
  \path[->] (1.25,.75) edge (1.75,.25);
  \path[->] (2.75,.75) edge (2.25,.25); 
  \path[->] (3.25,.75) edge (3.75,.25);
  \path[dashed,->] (1.5,1) edge node[above] {\tiny{flop}} (2.75,1);
\end{tikzpicture}.
\end{center}
The plane $\Pi\subset Y$ is the image of $\Pi_{X'}$ with the three nodes given by the contracted curves. After the flop the divisor $\Pi_{X'}$ becomes a plane $\Pi_Z\cong \PP^2$ with normal bundle $\sO_Z(-2)$, so we can contract it to get $Y$ with a $\tfrac12$-quotient singularity. If we want to consider non-distinct tangent directions then this picture becomes more complicated.
\end{rmk}

\begin{rmk}
Looking back at \eqref{eqns} one may ask what happens if we unproject the ideal $(\xi_1,\xi_2,\eta)\subset \sO_{Y'}$ or, equivalently, the Jer$_{12}$ ideal $(\xi_1,\xi_2,\eta,\zeta)\subset\sO_Y$. Even though this may not appear to make sense geometrically, it is a well-defined operation in algebra. If we do then we introduce a variable $\iota$ of weight $-1$ that is nothing other than the inclusion $\iota \colon \sI_\Gamma \hookrightarrow \sO_X$. The whole picture is a big graded ring
\[ \sR := \sO_X(-1,1,1,1,2) / (\text{codim 4 ideal})  \]
and writing $\sR_+$ (resp.\ $\sR_-$) for the positively (resp.\ negatively) graded part of $\sR$ we can construct the divisorial extraction in the style of \cite{whatis}, as the Proj of a $\ZZ$-graded algebra
\begin{center}
\begin{tikzpicture}
  \node at (0.5,1) {$\Proj_{\sO_X} \sR_-$};
  \node at (3,0) {$X= \Spec \sO_X$};
  \node at (6,1) {$Y = \Proj_{\sO_X} \sR_+$};
  
  \draw[double, double distance = 2pt] (1.5,.8) -- (2,.3); 
  \path[->] (4.5,.8) edge node [right] {$\sigma$} (4,.3);
\end{tikzpicture}.
\end{center}
\end{rmk}

\begin{rmk}
The unprojection variable $\zeta$ corresponds to a generator of $\bigoplus\sI^{[n]}$ that lies in $\sI^{[2]}\setminus \sI^2$. Either by writing out one of the equations involving $\zeta$ and substituting for the values of $\xi_1,\xi_2,\eta$, or by calculating the unprojection equations of $\iota$, we can give an explicit expression for $\zeta$ as
\[ \iota\zeta = (ax+by)\xi_1 + (cy+dz)\xi_2 + (acx+ady+bcy+bdz)\eta. \]
In terms of the orbifold equation $\gamma$, the generators $\xi_1,\xi_2,\zeta$ are lifts modulo $\eta$ of the forms $u\gamma,v\gamma,\gamma^2$ defined on $S$.
\end{rmk}

\subsection{The $\bA_2^1$ case}\

Suppose that the general section $P\in\Gamma\subset S\subset X$ is of type $\bA_2^1$. By Lemma \ref{Lem1}, we are considering the curve given by the equations
\[ \bigwedge^2 \begin{pmatrix}
x & y & -(dy+ez) \\
y^2 & z & ax+by
\end{pmatrix} = 0 \]
for some choice of functions $a,b,d,e\in\sO_X$. If $a,b,d,e$ are taken generically then the general section through $\Gamma$ is of type $A_1$, so we need to introduce some more conditions on these functions. 

Consider the section $H_{\lambda,\mu} = \{ h_{\lambda,\mu} := \eta+\lambda\xi_1+\mu\xi_2 =0\}$.  The quadratic term of this equation is given by
\[ h^{(2)}_{\lambda,\mu} = xz + \lambda x(a_0x+b_0y) + \mu (a_0xy+b_0y^2+d_0yz+e_0z^2) \]
where $a_0$ is the constant term of $a$ and similarly for $b,d,e$. To ensure the general section is of type $A_2$ it is enough to ask that $h_{\lambda,\mu}^{(2)}$ has rank 2 for all $\lambda,\mu$. After playing around, completing the square etc., we get two cases according to whether $x\mid h_{\lambda,\mu}^{(2)}$ or $z\mid h_{\lambda,\mu}^{(2)}$:
\begin{align*}
a_0=b_0=0 &\implies h_{\lambda,\mu}^{(2)} = z(x + \mu d_0y + \mu e_0z), \\
b_0=d_0=e_0=0 &\implies  h_{\lambda,\mu}^{(2)} = x(z + \lambda a_0x + \mu a_0y).
\end{align*}

\subsubsection{Case 1---Tom$_1$}\

Take the first case where $a_0=b_0=0$. Then we can rewrite $ax+by$ as $ax^2+bxy+cy^2$, so that the equations of $\Gamma$ become
\[ \bigwedge^2 \begin{pmatrix}
x & y & -\left(dy+ez\right) \\
y^2 & z & ax^2+bxy+cy^2
\end{pmatrix} = 0. \]
\emph{Claim:} The following two conditions must hold
\begin{enumerate}
\item one of $a,b,c,d\notin\frakm$,
\item one of $d,e\notin\frakm$,
\end{enumerate}
and (after possibly changing variables) we can assume that $a,e\notin\frakm$.

Statement (2) follows from Lemma \ref{Lem2}. The first is also proved in a similar way. If (1) does not hold then necessarily $e\notin\frakm$ by (2). Consider the point $Q_\eta\in Y$ where all variables but $\eta$ vanish, as in the proof of Lemma \ref{Lem2}. This is a Gorenstein point with local equation
\[ ey^2\xi_1 - x\xi_1\xi_2 + y\xi_2^2 + dy\xi_2 + e(ax^2+bxy + cy^2) = 0 \]
and if $a,b,c,d\in\frakm$ then this equation is not cDV as no terms of degree $2$ appear, so it is not terminal.

By considering the minimal resolution $\widetilde{S}\to S$, we see that any $\Gamma$ that satisfies these conditions is a curve whose birational transform $\widetilde{\Gamma}\subset\widetilde{S}$ intersects the exceptional locus with multiplicities 
\begin{center} \begin{tikzpicture}
	\node[label={[label distance=-0.2cm]90:\tiny{$3$}}] at (1,1) {$\bullet$};
	\node[label={[label distance=-0.2cm]90:\tiny{$1$}}] at (2,1) {$\bullet$};
	\draw
       		(1.075,1)--(1.925,1);
\end{tikzpicture},  \end{center} 
i.e.\ $\widetilde{\Gamma}$ intersects $E_1=\PP^1_{(x_1:x_2)}$ with multiplicity three and $E_2=\PP^1_{(y_1:y_2)}$ with multiplicity one, according to the (nonzero) equations 
\begin{align*} 
\widetilde{\Gamma}\cap E_1 &\colon \quad a_0x_1^3 + b_0x_1^2x_2 + c_0x_1x_2^2 + d_0x_2^3 = 0, \\
\widetilde{\Gamma}\cap E_2 &\colon \quad d_0 y_1 + e_0 y_2 = 0. 
\end{align*}

If we follow the Prokhorov--Reid example, we can write down a codimension 3 model of the blowup of $\Gamma$ as $\sigma'' \colon Y''\subset X\times\PP(1,1,1,2) \to X$ given by the the Pfaffians of the matrix
\[ \begin{pmatrix}
\zeta & \xi_2 & \xi_1 + d\eta & -e\eta \\ 
& -(ax+by)\eta & y(\xi_2 + c\eta) & \xi_1 \\
& & z & y \\
& & & x
\end{pmatrix} \]

The variety $Y''$ is \emph{not} the divisorial extraction since $\sigma''$ is not small. A new unprojection plane appears after the first unprojection. This plane $\Pi$ is defined by the ideal $(x,y,z,\xi_1)$ and we can see that the matrix is in Tom$_1$ format with respect to this ideal. The central fibre $\sigma''^{-1}(P)$ is given by $\Pi$ together with the line 
\[L_1=(x=y=z=\xi_2=\xi_1+d\eta=0).\]

Unprojecting $\Pi$ gives a new variable $\theta$ of weight three with four additional equations
\begin{align*} 
x\theta &= (\zeta + be\eta^2)(\xi_1+d\eta) + e\xi_2(\xi_2+c\eta)\eta \\
y\theta &= \xi_2\zeta - ae(\xi_1+d\eta)\eta^2 \\
z\theta &= \xi_2^2(\xi_2+c\eta) + b\xi_2(\xi_1+d\eta)\eta + a(\xi_1+d\eta)^2\eta \\
\xi_1\theta &= \zeta(\zeta + be\eta^2) + ae^2(\xi_2+c\eta)\eta^3
\end{align*}

Generically, the central fibre consists of four lines passing through the point $P_\theta$, the line $L_1$ and the three lines that appear after unprojecting $\Pi$. The open neighbourhood $(P_\theta\in U_\theta)$ is isomorphic to a $\tfrac13(1,1,2)$ singularity. As we assume $a,e\notin\frakm$, when $\eta=1$ we can use the equations to eliminate $x,z,\xi_1,\xi_2$ so that all the points $Q_L=L\cap\{\zeta=0\}$, for $L\subseteq \sigma^{-1}(P)_{\text{red}}$, are smooth.

\subsubsection{Case 2---Jer$_{45}$}\

Now consider instead the case where $b_0=d_0=e_0=0$. In direct analogy to the Tom$_1$ case the reader can check that
\begin{enumerate} 
\item $\Gamma$ is a curve of type
\begin{center} \begin{tikzpicture}
	\node[label={[label distance=-0.2cm]90:\tiny{$4$}}] at (1,1) {$\bullet$};
	\node at (2,1) {$\bullet$};
	\draw
       		(1.075,1)--(2,1);
\end{tikzpicture},  \end{center}
\item after making the first unprojection we get a variety $Y'$ containing a plane $\Pi$ above $P$ defined by the Jer$_{45}$ ideal $(x,y,z,\xi_2)$,
\item $Y'$ has (at most) four nodes along $\Pi$ corresponding to the roots of the orbifold equation $\gamma$, 
\item after unprojecting $\Pi$ we get a variety $Y$ with small fibre over $P$, hence $Y$ is the divisorial extraction,
\item the open neighbourhood of the unprojection point $(P_\theta\in U_\theta)$ is isomorphic to the quotient singularity $\frac13(1,1,2)$,
\item $Y$ has at worst $cA$ singularities at the points $Q_L$ according to whether $\gamma$ has repeated roots.
\end{enumerate}

\subsection{An $\bA_3^2$ example} \

Suppose that the general section $P\in\Gamma\subset S\subset X$ is of type $\bA_3^2$ and that $\Gamma$ is a curve whose birational transform on a resolution of $S$ intersects the exceptional divisor with multiplicities
\begin{center} \begin{tikzpicture}
	\node at (1,1) {$\bullet$};
	\node[label={[label distance=-0.2cm]90:\tiny{$3$}}] at (2,1) {$\bullet$};
	\node at (3,1) {$\bullet$};
	\draw
       		(1,1)--(1.925,1)
       		(2.075,1)--(3,1);
\end{tikzpicture}.  \end{center}
Then a terminal extraction from $\Gamma\subset X$ exists.

The calculation is very similar to the Prokhorov--Reid example, except that the first unprojection divisor $\Pi\subset Y'$ is defined by the ideal $I(\phi)=(x,y^2,z)$, so that $\Pi$ is \emph{not reduced}. After unprojecting $\Pi$ we get an index 2 model $Y\subset X\times \PP(1,1,1,2)$ for the divisorial extraction with equations 
\[ \begin{pmatrix}
\zeta & \xi_2 & \xi_1 + c\eta & -d\eta \\ 
& -a\eta & \xi_2 + b\eta & \xi_1 \\
& & z & y^2 \\
& & & x 
\end{pmatrix} \]
$\Pi$ is contracted to a singularity of type $cA_1/2$, given by the $\tfrac12$-quotient of the hypersurface singularity
\[ y^2 - \xi_1\xi_2 + ad\eta^2 = 0. \]

\section{Divisorial Extractions from Singular Curves: Types $D$ \& $E$}

The result of the calculations in this section are summed up in the following theorem.
\begin{thm} \label{excthm}
Suppose $P\in \Gamma\subset S\subset X$ as in \S\ref{geconj}. 
\begin{enumerate}
\item Suppose that $\Gamma$ is of type $\bD_n^l,\bD_{2k}^r$ or $\bE_7$. Then the divisorial extraction has a codimension 3 model 
\[\sigma \colon Y\subset X\times \PP(1,1,1,2) \to X \] 
In particular, $Y$ has index 2 and $\bigoplus\sI^{[n]}$ is generated in degrees $\leq 2$. 

Moreover, $Y$ is singular along a component line of the central fibre, so there does not exist a terminal extraction from $\Gamma$. 
\item Suppose that $\Gamma$ is of type  $\bE_6$. We need to consider two cases.
\begin{enumerate}
\item The restriction map $\sI_{\Gamma/X}\to\sI_{\Gamma/S}$ is surjective. Then the divisorial extraction has a codimension 4 model 
\[\sigma \colon Y\subset X\times \PP(1,1,1,2,3) \to X \] 
In particular, $Y$ has index 3 and $\bigoplus\sI^{[n]}$ is generated in degrees $\leq 3$. 

Moreover, if $Y$ is terminal then $\Gamma$ is a curve of type 
\begin{center} \begin{tikzpicture}
	\node at (1,1) {$\bullet$};
	\node at (2,1) {$\bullet$};
	\node[label={[label distance=-0.2cm]80:\tiny{$1$}}] at (3,1) {$\bullet$};
	\node at (4,1) {$\bullet$};
	\node[label={[label distance=-0.2cm]90:\tiny{$2$}}] at (5,1) {$\bullet$};
	\node at (3,2) {$\bullet$};
	\draw
       		(1,1)--(2.925,1)
       		(3.075,1)--(4.925,1)
       		(3,1.075)--(3,2);  
\end{tikzpicture}  \end{center}

\item The restriction map $\sI_{\Gamma/X}\to\sI_{\Gamma/S}$ is not surjective. Then the divisorial extraction has a codimension 5 model 
\[\sigma \colon Y\subset X\times \PP(1,1,1,2,3,4) \to X \] 
In particular, $Y$ has index 4 and $\bigoplus\sI^{[n]}$ is generated in degrees $\leq 4$.  

Moreover, if $Y$ is terminal then $\Gamma$ is a curve of type
\begin{center} \begin{tikzpicture}
	\node at (1,1) {$\bullet$};
	\node at (2,1) {$\bullet$};
	\node[label={[label distance=-0.2cm]80:\tiny{$1$}}] at (3,1) {$\bullet$};
	\node[label={[label distance=-0.2cm]90:\tiny{$1$}}] at (4,1) {$\bullet$};
	\node at (5,1) {$\circ$};
	\node at (3,2) {$\bullet$};
	\draw
       		(1,1)--(2.925,1)
       		(3.075,1)--(3.925,1)
       		(4.075,1)--(4.925,1)
       		(3,1.075)--(3,2);  
\end{tikzpicture}  \end{center}
In this case, the central fibre $Z\subset Y$ is a union of lines meeting at a $cAx/4$ singularity. The curve marked $\circ$ is pulled out in a partial resolution of $S$.
\end{enumerate}
\end{enumerate}
\end{thm}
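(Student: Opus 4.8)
The plan is to treat each Du Val type in turn, following verbatim the strategy of the type $A$ examples in \S\ref{1st_unproj} and \S\ref{prokreid}. For a fixed type I first invoke Lemma \ref{Lem1} to put $\Gamma\subset X$ into the normal form of that lemma: its equations are the $2\times 2$ minors of a $2\times 3$ matrix whose first two columns form $\phi$ and whose third column $(g,h)$ satisfies $g,h\in I(\phi)$, with $\phi$ and $I(\phi)$ read off from Table \ref{mftab}. As in \S\ref{1st_unproj}, Cramer's rule produces the blowup $Y'=\Proj_X\bigoplus\sI^n$ of the ordinary power algebra, containing a plane $\Pi\subset Y'$ cut out by $I(\phi)$. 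I then perform successive Kustin--Miller unprojections of $\Pi$ (and of each further plane that surfaces in the central fibre) until the fibre over $P$ becomes one-dimensional. At that point Theorem \ref{uniq} guarantees that the resulting $Y$ is \emph{the} divisorial extraction and that $\bigoplus\sI^{[n]}$ is its symbolic power algebra; the number of unprojections required therefore fixes both the codimension of the model and the top degree of a generator. The remaining work in each case is to test terminality of $Y$ along every component line $L$ of the central fibre.

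For the types $\bD_n^l$, $\bD_{2k}^r$ and $\bE_7$ one has $I(\phi)=(x,z,y^2)$, $(x,y,z^k)$ and $(x,y,z^3)$ respectively, each defining a single (non-reduced) plane. Here a single unprojection already makes the fibre small: adjoining a degree-$2$ variable $\zeta$ gives a codimension $3$ model $Y\subset X\times\PP(1,1,1,2)$, written as the maximal Pfaffians of a $5\times 5$ skew matrix exactly as in \S\ref{prokreid}, with unprojection point a $\tfrac12(1,1,1)$ quotient singularity. Thus $Y$ has index $2$ and $\bigoplus\sI^{[n]}$ is generated in degrees $\leq 2$. The decisive step is to locate a component line $L$ of the central fibre along which $Y$ is singular: in these cases the defining Pfaffians degenerate so that their entries lie in $\frakm_Q^2$ at every $Q\in L$, giving a non-isolated singular locus. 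Since $3$-fold terminal singularities are isolated, this rules out a terminal extraction and establishes part (1).

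For type $\bE_6$ one has $I(\phi)=(x,y,z^2)$, and the new phenomenon is that a fresh unprojection plane surfaces after the first unprojection. The two sub-cases are governed by whether the restriction $\sI_{\Gamma/X}\to\sI_{\Gamma/S}$ is surjective. In case (a) two unprojections suffice: adjoining variables of degrees $2$ and $3$ produces a codimension $4$ model $Y\subset X\times\PP(1,1,1,2,3)$ of index $3$, so $\bigoplus\sI^{[n]}$ is generated in degrees $\leq 3$. In case (b) the missing generator forces a degree-$4$ variable and hence a third unprojection, giving a codimension $5$ model $Y\subset X\times\PP(1,1,1,2,3,4)$ of index $4$. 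In each sub-case I describe the central fibre and, passing through the quotient $S=\CC^2/G$, read the local singularity at each point $Q_L=L\cap\{\zeta=0\}$ from the roots of the orbifold equation $\gamma$. Demanding that $Y$ be terminal (i.e.\ smooth or $cA$ at every $Q_L$) then constrains $\gamma$, and translating these constraints onto the minimal resolution $\widetilde{S}\to S$ yields exactly the decorated intersection diagrams of the statement; in case (b) the unprojection point is identified as a (terminal) $cAx/4$ singularity, the extra $\circ$-node being a curve pulled out in a partial resolution of $S$.

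The main obstacle is case (b) of $\bE_6$. First I must recognise that the failure of $\sI_{\Gamma/X}\to\sI_{\Gamma/S}$ to be surjective is precisely the condition producing a degree-$4$ generator of the symbolic power algebra, and then confirm that exactly three unprojections are needed---that is, that after the third the fibre is genuinely one-dimensional and no further two-dimensional component lurks, a check carried out from the ideal of each successive plane together with the Pfaffian/resolution data. Identifying the resulting unprojection point as a terminal $cAx/4$ point, rather than a worse singularity, is the crux. Running in parallel with this is the bookkeeping that converts the local smooth/$cA$ conditions at the points $Q_L$ into the combinatorial statement about the intersection multiplicities of $\widetilde{\Gamma}$ with the exceptional $-2$-curves of $\widetilde{S}$---via $\gamma$ and the geometry of the minimal resolution---which is what produces the final classification.
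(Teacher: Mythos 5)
Your overall architecture is indeed the paper's: normal form from Lemma \ref{Lem1}, Cramer's rule giving the codimension~$2$ model with a plane cut out by $I(\phi)$, serial Kustin--Miller unprojection until the fibre over $P$ is one-dimensional, uniqueness via Proposition \ref{uniq}, then chart-by-chart singularity analysis; your codimensions, indices, degree bounds and unprojection counts all agree with the paper. But two of your concrete mechanisms are wrong, and they are the ones carrying the content. First, Lemma \ref{Lem1} alone does not pin down the curves: the paper also needs Remark \ref{msqu} (that $g,h\in\frakm^2\cap I(\phi)$, because the general section is of type $D$ or $E$), and in the $\bE_6$ case the further constraint that the $2$-jet of $g$ be a multiple of $y^2$ (forced by requiring $x^2-y^3+\lambda y g^{(2)}$ to remain an $E_6$ cubic for all $\lambda$), yielding the normal form $g=ay^2+byz^2+cz^3$, $h=dy^2+eyz+fz^2$ with $f\notin\frakm$. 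Every case division in the theorem is phrased in these coefficients --- surjectivity of $\sI_{\Gamma/X}\to\sI_{\Gamma/S}$ is exactly $c\in\frakm$ (equivalently, taking the second unprojection ideal to be $(x,y,z^2,\xi_2)$ rather than $(x,y,z,\xi_2)$), and terminality becomes $a\notin\frakm$, $a+f\notin\frakm$ --- so without this refinement your plan cannot produce the decorated diagrams.

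Second, your mechanism for part (1) fails. It is not true that the defining equations lie in $\frakm_Q^2$ at every point of the bad line: for $\bD_n^l$ the Pfaffian $\zeta x-(\xi_1-a\eta)\bigl(\xi_2+(ey+fz)\eta\bigr)-d(by+cz)\eta^2$ has nonvanishing linear part at a generic point $Q\in L_1$ (there $\zeta\neq 0$, so $\zeta x$ contributes), and likewise the Pfaffian $\zeta z+d\xi_2\eta-\xi_1(\xi_1-a\eta)$; the tangent space along $L_1$ has dimension $4$, not $6$. The paper's actual argument passes to the chart $U_\zeta$, uses precisely those two equations to eliminate $x$ and $z$, and observes that the residual hypersurface equation lies in $(y,\xi_1,\xi_2)^2$ --- a computation that again requires $g,h\in\frakm^2$, i.e.\ Remark \ref{msqu}. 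Relatedly, your claim that the unprojection point is a $\tfrac12(1,1,1)$ quotient contradicts your own conclusion: the singular line $L_1$ passes through $P_\zeta$, so $U_\zeta$ is a $\tfrac12$-hyperquotient that is singular along a line, not a cyclic quotient point (that happens only in the $A_1$ example, where all three of $x,y,z$ can be eliminated). Finally, in the $\bE_6$ cases the terminality constraints do not come from smooth/$cA$ checks at the points $Q_L$ via roots of the orbifold equation --- that is the type $A$ mechanism; they come from isolation of the hyperquotient singularity in the last unprojection chart: a $cD_4/3$ point at $P_\theta$ in case (a), non-isolated exactly when $a\in\frakm$ or $a+f\in\frakm$, and a $cAx/4$ point at $P_\kappa$ in case (b), non-isolated exactly when $a\in\frakm$.
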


Before launching into the proof of this theorem note the following useful remark.
\begin{rmk} \label{msqu}
Suppose the general section $P\in\Gamma\subset S\subset X$ is of type $D$ or $E$. Then we can write the equations of $\Gamma$ as
\[ \bigwedge^2 \begin{pmatrix} \: \phi & \begin{matrix} -g(y,z) \\ h(y,z) \end{matrix} \end{pmatrix} = 0 \]
where $g,h\in \frakm^2\cap I(\phi)$. To see this consider the matrix factorisations in Table \ref{mftab}. Firstly, we can use column operations to cancel any terms involving $x$ from $g,h$. Then to prove $g,h\in\frakm^2$ consider the section $h_{\lambda,\mu}=\eta + \lambda\xi_1 + \mu\xi_2$. The quadratic term of $h_{\lambda,\mu}$ is
\[ h_{\lambda,\mu}^{(2)} = x^2 + \lambda xh^{(1)} + \mu xg^{(1)} + \lambda tg^{(1)} \quad \text{(where $t=y$ or $z$)} \]
and we require this to be a square for all $\lambda,\mu$. This happens only if $g^{(1)}=h^{(1)}=0$.
\end{rmk}

\subsection{The $\bD_n^l,\bD_{2k}^r$ and $\bE_7$ cases} \

These three calculations are essentially all the same. Since they are so similar we only do the $\bD_n^l$ case explicitly.

\subsubsection{The $\bD_n^l$ case}\

According to Lemma \ref{Lem1} and Remark \ref{msqu}, the curve $\Gamma\subset S\subset X$ is defined by the equations 
\[ \bigwedge^2 \begin{pmatrix}
x & y^2+z^{n-2} & a(y^2+z^{n-2}) + byz + cz^2 \\
z & x & d(y^2+z^{n-2}) + eyz + fz^2 \\
\end{pmatrix} = 0 \]
for some functions $a,b,c,d,e,f\in\sO_X$. Unprojecting $I(\phi)$ gives a variety 
\[ \sigma \colon Y\subset X\times \PP(1,1,1,2) \to X\]
with equations given by the maximal Pfaffians of the matrix
\[ \begin{pmatrix}
\zeta & \xi_2 & \xi_1 - a\eta & (by+cz)\eta \\
& -\xi_1 & -d\eta & \xi_2 + (ey+fz)\eta \\
& & z & y^2+z^{n-2} \\
& & & x
\end{pmatrix}.  \]
This $\sigma$ is a small map, so that $Y$ is the divisorial extraction of $\Gamma$. Indeed, the central fibre $Z=\sigma^{-1}(P)_{\text{red}}$ consists of two components meeting at the point $P_\zeta$. These are the lines 
\begin{align*} 
L_1 &=(x=y=z=\xi_1=\xi_2=0)\\ 
L_2 &=(x=y=z=\xi_1-a\eta=\xi_2=0)
\end{align*}

Looking at the affine patch $U_\zeta := \{\zeta=1\}\subset Y$ we see that we can eliminate the variables $x,z$ and that $U_\zeta$ is a $\tfrac12$-quotient of the hypersurface singularity
\[ y^2 + z^{n-2} - \xi_2^2 - (e\xi_2+b\xi_1)y\eta - (f\xi_2+c\xi_1)z\eta = 0 \]
where $z= \xi_1^2 - (a\xi_1 + d\xi_2)\eta$. 

This hypersurface is singular along the line $L_1$ since this equation is contained in the square of the ideal $(y,\xi_1,\xi_2)$. Therefore $Y$ has nonisolated singularities and cannot be terminal.

\subsection{The $\bE_6$ case} \

Suppose that $\Gamma\subset S\subset X$ is of type $\bE_6$. By Lemma \ref{Lem1} the equations of $\Gamma$ can be written in the form
\[\bigwedge^2\begin{pmatrix}
x & y^2 & -g(y,z) \\
y & x+z^2 & h(y,z) \\
\end{pmatrix}=0\]
where $g,h\in\frakm^2$ by Remark \ref{msqu}. Now consider the general section $H_{\lambda,\mu} = \eta+\lambda\xi_1+\mu\xi_2$. After making the replacement $x \mapsto x+\tfrac12(\lambda h + \mu g)$ the cubic term of $H_{\lambda,\mu}$ is given by
\[ x^2 - y^3 + \lambda yg^{(2)} \]
where $g^{(2)}$ is the 2-jet of $g$. For the general $H_{\lambda,\mu}$ to be of type $E_6$, we require $y(y^2 - \lambda g^{(2)})$ to be a perfect cube for all values of $\lambda$. This happens only if $g^{(2)}$ is a multiple of $y^2$. Therefore we can take $g$ and $h$ to be
\[ g(y,z) = a(y,z)y^2 + b(z)yz^2 + c(z)z^3, \quad h(y,z) = d(y)y^2 + e(y)yz + f(y,z)z^2, \]
for some choice of functions $a,b,c,d,e,f\in\sO_X$. Moreover, $f\not\in\frakm$ else the extraction is not terminal by Lemma \ref{Lem2}.

By specialising these coefficients the curve we are considering varies. After writing down the minimal resolution $\widetilde{S}\to S$ explicitly, one can check that the birational transform of $\Gamma$ is a curve intersecting the exceptional locus of $\widetilde{S}$ with the following multiplicities:
\begin{center}
\begin{tabular}{cm{5cm}cm{5cm}}
Generic & \begin{tikzpicture}
	\node at (1,1) {$\bullet$};
	\node at (2,1) {$\bullet$};
	\node[label={[label distance=-0.2cm]80:\tiny{$1$}}] at (3,1) {$\bullet$};
	\node[label={[label distance=-0.2cm]90:\tiny{$1$}}] at (4,1) {$\bullet$};
	\node at (5,1) {$\bullet$};
	\node at (3,2) {$\bullet$};
	\draw
       		(1,1)--(2.925,1)
       		(3.075,1)--(3.925,1)
       		(4.075,1)--(4.925,1)
       		(3,1.075)--(3,2);  
\end{tikzpicture} & $c\in\frakm$ & \begin{tikzpicture}
	\node at (1,1) {$\bullet$};
	\node at (2,1) {$\bullet$};
	\node[label={[label distance=-0.2cm]80:\tiny{$1$}}] at (3,1) {$\bullet$};
	\node at (4,1) {$\bullet$};
	\node[label={[label distance=-0.2cm]90:\tiny{$2$}}] at (5,1) {$\bullet$};
	\node at (3,2) {$\bullet$};
	\draw
       		(1,1)--(2.925,1)
       		(3.075,1)--(4.925,1)
       		(3,1.075)--(3,2);  
\end{tikzpicture} \\ 

$a\in\frakm$ & \begin{tikzpicture}
	\node at (1,1) {$\bullet$};
	\node at (2,1) {$\bullet$};
	\node at (3,1) {$\bullet$};
	\node[label={[label distance=-0.2cm]90:\tiny{$1$}}] at (4,1) {$\bullet$};
	\node at (5,1) {$\bullet$};
	\node[label={[label distance=-0.2cm]90:\tiny{$2$}}] at (3,2) {$\bullet$};
	\draw
       		(1,1)--(3.925,1)
       		(4.075,1)--(4.925,1)
       		(3,1)--(3,1.925);  
\end{tikzpicture} & $a,c\in\frakm$ & \begin{tikzpicture}
	\node at (1,1) {$\bullet$};
	\node at (2,1) {$\bullet$};
	\node at (3,1) {$\bullet$};
	\node at (4,1) {$\bullet$};
	\node[label={[label distance=-0.2cm]90:\tiny{$2$}}] at (5,1) {$\bullet$};
	\node[label={[label distance=-0.2cm]90:\tiny{$2$}}] at (3,2) {$\bullet$};
	\draw
       		(1,1)--(4.925,1)
       		(3,1)--(3,1.925);  
\end{tikzpicture} \\

& & $a+f,c\in\frakm$ & \begin{tikzpicture}
	\node at (1,1) {$\bullet$};
	\node at (2,1) {$\bullet$};
	\node at (3,1) {$\bullet$};
	\node[label={[label distance=-0.2cm]90:\tiny{$2$}}] at (4,1) {$\bullet$};
	\node[label={[label distance=-0.2cm]90:\tiny{$1$}}] at (5,1) {$\bullet$};
	\node at (3,2) {$\bullet$};
	\draw
       		(1,1)--(3.925,1)
       		(4.075,1)--(4.925,1)
       		(3,1)--(3,2);  
\end{tikzpicture}. \\
\end{tabular}
\end{center}

The first unprojection $\sigma'\colon Y' \to X$ is defined by the maximal Pfaffians of the matrix 
\begin{equation}\begin{pmatrix}
\zeta & \xi_2 & y(\xi_1+a\eta) & -(by+cz)\eta \\
 & \xi_1 & \xi_2+(dy+ez)\eta & \xi_1-f\eta \\
 & & z^2 & y \\
 & & & x 
\end{pmatrix} \label{e6eqns}\tag{$\dagger$}\end{equation}
and this $Y'$ contains a new unprojection divisor defined by an ideal $I$ in Tom$_2$ format. If the coefficient $c$ is assumed to be chosen generally then $I=(x,y,z,\xi_2)$. However, if we make the specialisation $c\in\frakm$, we can take $I$ to be the smaller ideal $(x,y,z^2,\xi_2)$. Unprojecting these two ideals gives very different varieties.

\subsubsection{The special $\bE_6$ case: $c\in\frakm$}\

Since it is easier, consider first the case when $c\in\frakm$, i.e.\ we let $c(z)=c'(z)z$. Unprojecting $(x,y,z^2,\xi_2)$ gives a codimension 4 model,
\[ \sigma \colon Y \subset X\times \PP(1,1,1,2,3) \to X \] 
defined by the five Pfaffians above \eqref{e6eqns}, plus four additional equations:
\begin{align*}
x\theta &= (\xi_1+a\eta)(\xi_1-f\eta)^2 + b(\xi_1-f\eta)(\xi_2+(dy+ez)\eta)\eta +c'(\xi_2+(dy+ez)\eta)^2\eta, \\
y\theta &= \zeta(\xi_1-f\eta) + c'\xi_1(\xi_2+(dy+ez)\eta)\eta, \\
z^2\theta &= (\zeta-b\xi_1\eta)(\xi_2+(dy+ez)\eta) - \xi_1(\xi_1+a\eta)(\xi_1-f\eta), \\
\xi_2\theta &= \zeta(\zeta - b\xi_1\eta) + c'\xi_1^2(\xi_1+a\eta)\eta.
\end{align*}

The central fibre $Z$ is a union of three lines meeting at the unprojection point $P_\theta$, so that $Y$ is the divisorial extraction of $\Gamma$. These three lines are given by $x,y,z,\xi_2=0$ and
\begin{center}
\begin{tikzpicture}
\node at (0,1.2) {$L_1$};
\node at (0,0.6) {$L_2$};
\node at (0,0) {$L_3$};
\draw [decorate,decoration={brace,amplitude=3pt},xshift=-4pt,yshift=0pt]
(0.5,1.2) -- (0.5,0.6);
\node at (5,0.9) {$\xi_1-f\eta=\zeta^2 - bf\zeta\eta^2 + c'f^2(a+f)\eta^4=0$};
\node at (5,0) {$\xi_1+a\eta=\zeta=0$};
\end{tikzpicture}
\end{center}

In the open neighbourhood $P_\theta\in U_\theta$ we can eliminate $x,y,\xi_2$ by the equations involving $\theta$ above. We are left with a $\tfrac13$-quotient of the hypersurface singularity 
\[ H : z^2 = (\zeta-b\xi_1\eta)(\xi_2 + (dy+ez)\eta) - \xi_1(\xi_1+a\eta)(\xi_1-f\eta) \]

If $H$ is not isolated then $Y$ will have nonisolated singularities and there will be no terminal extraction from $\Gamma$. This happens if either $a\in\frakm$ or $a+f\in\frakm$. If $a\in\frakm$ then $H$ becomes singular along $L_3$. If $a+f\in\frakm$ then one of $L_1,L_2$ satisfies $\zeta - bf\eta^2=0$ and $H$ becomes singular along this line.

Now we can assume that $a,a+f,f\not\in\frakm$, and consider the (general) hyperplane section $\eta=0$, to see that $P_\theta\in U_\theta$ is the $cD_4/3$ point 
\[ \big( z^2 - \zeta^3 + \xi_1^3 + \eta(\cdots) = 0 \big) \: / \: \tfrac13(0,2,1,1;0). \]

\subsubsection{The general $\bE_6$ case: $c\not\in\frakm$}\

Now consider the more general case where $c$ is invertible. The difference between this and the last case is the existence of a form $\theta'$, vanishing three times on $\Gamma\subset S$, which fails to lift to $X$. 

We need to make two unprojections in order to construct the divisorial extraction $Y$. The first unprojection divisor defined by the Tom$_2$ ideal $(x,y,z,\xi_2)$ as described above. Then a new divisor appears defined by the ideal $\big(x,y,z,\xi_2,\xi_1(\xi_1+a\eta)\big)$. We add two new variables $\theta,\kappa$ of degrees 3,4 (resp.) to our ring and we end up with a variety in codimension 5
\[ \sigma \colon Y\subset X\times \PP(1,1,1,2,3,4) \to X. \]
The equations of $Y$ are given by the five equations \eqref{e6eqns} and nine new unprojection equations: four involving $\theta$ and five involving $\kappa$. The important equation is
\[ \xi_1(\xi_1+a\eta)\kappa = \zeta(\zeta - b\xi_1\eta)^2 - \theta(\theta - cd\xi_1\eta^2) + e\theta(\zeta-b\xi_1\eta)\eta + d\zeta(\xi_1-f\eta)(\zeta-b\xi_1\eta)\eta. \]
The open set of the unprojection point $P_\kappa\in U_\kappa$ is a hyperquotient point
\[ \xi_1^2 + \theta^2 - \zeta^3 + \eta(\cdots) = 0 \big) \: / \: \tfrac14(1,2,3,1;2), \]
which is the equation of a $cAx/4$ singularity. Moreover, one can check that this singularity is not isolated if $a\in\frakm$. Therefore, if $Y$ is terminal then $a\not\in\frakm$ and $\Gamma$ is as described in Theorem \ref{excthm}.

The central fibre of this extraction consists of (one or) two rational curves. One of these curves is pulled out in a partial resolution of $S$.

\subsection{The $\bD_{2k+1}^r$ case}\

This is certainly the most complicated of the exceptional cases and I intend to treat it fully in another paper, however some calculations predict that it should be similar to the $\bE_6$ case in the following sense. 

In general the restriction map $\sI_{\Gamma/X}\to \sI_{\Gamma/S}$ is not surjective, although after specialising some coefficients there is a good case where it does become surjective. 

If the map is not surjective then the divisorial extraction $\sigma\colon Y\to X$ pulls one or more curves out of $S$, so that pulling back to $S_Y\subset Y$ gives a partial crepant resolution $\sigma\colon S_Y\to S$.

In the good case we can unproject just three divisors, given by the chain of ideals
\[ (x,y,z^k), \quad (x,y,z,\xi_2), \quad (x,y,z,\xi_2,\xi_1^2)  \] 
to get a codimension 5 model $Y\subset X\times \PP(1,1,1,2,3,4)$ of index 4. The restriction to $\sigma\colon S_Y\to S$ is an isomorphism and, if it is isolated, the last unprojection point $Q\in S_Y\subset Y$ is a singularity of type $cAx/4$.


\begin{thebibliography}{10000}

\bibitem[BKR]{bkr} G. Brown, M. Kerber, M. Reid, {\em Fano $3$-folds in codimension 4, Tom and Jerry, Part I} arXiv:1009.4313v2, September 2010
\bibitem[BR]{dip} G. Brown, M. Reid, {\em The Diptychs Project,} http://homepages.lboro.ac.uk/$\sim$magdb/diptych.html
\bibitem[C]{cut} S. Cutkosky, {\em Elementary Contractions of Gorenstein Threefolds,} Math. Ann. 280, pp.\ 521-525, 1980
\bibitem[E]{eis} D. Eisenbud, {\em Homological algebra on a complete intersection, with an application to group representations,} Transactions of the AMS. 260 (1980), no.1, 35-64.
\bibitem[HTU]{htu} P.\ Hacking, J.\ Tevelev, G.\ Urzu\'a, {\em Flipping Surfaces,} preprint arXiv:1310.1580, 65 pp.
\bibitem[KST]{kst} H. Kajiura; K. Saito, A. Takahashi, {\em Matrix factorization and representations of quivers. II. Type ADE case,} Adv. Math. 211 (2007), no. 1, 327Ð362.
\bibitem[K]{kawam} Y. Kawamata, {\em Divisorial Contractions to 3-Dimensional Terminal Quotient Singularities}, Higher-dimensional complex varieties (Trento, 1994), 241Ð246, de Gruyter, Berlin, 1996.
\bibitem[KM92]{km92} J. Koll\'ar, S. Mori, {\em Classification of Three-Dimensional Flips}, Journal of the American Mathematical Society, Vol. 5, No. 3 (Jul., 1992), pp. 533-703.
\bibitem[PaR]{par} S. Papadakis, M. Reid, {\em Kustin-Miller Unprojection Without Complexes,} J. Algebraic Geometry 13 (2004), pp.\ 563-577.
\bibitem[PrR]{pr} Y. Prokhorov, M. Reid, {\em On Q-Fano threefolds of Fano index 2,} arXiv:1203.0852, March 2012.
\bibitem[R1]{ypg} M. Reid, {\em Young Person's Guide to Canonical Singularities,} Algebraic geometry, Bowdoin, 1985 (Brunswick, Maine, 1985), Proc. Sympos. Pure Math., 46, Providence, R.I.: American Mathematical Society, pp. 345-414, 1987.
\bibitem[R2]{whatis} M. Reid, {\em What is a Flip?,} scanned notes, 53 pp., see

http://homepages.warwick.ac.uk/$\sim$masda/3folds/what\_flip.pdf
\bibitem[R3]{kino} M. Reid, {\em Graded Rings and Birational Geometry,} Proc. of algebraic
geometry symposium (Kinosaki, Oct 2000), K. Ohno (Ed.), 1Ð72, see http://homepages.warwick.ac.uk/$\sim$masda/3folds
\bibitem[R4]{duval} M. Reid, {\em The Du Val Singularities $A_n,D_n,E_6,E_7,E_8$,} see

http://homepages.warwick.ac.uk/$\sim$masda/surf/more/DuVal.pdf
\bibitem[Ta]{tak} H.\ Takagi, {\em Classification of Primary $\mathbb{Q}$-Fano Threefolds with Anti-canonical Du Val K3 Surfaces, I,} J. Algebraic Geometry, 15 (2006), pp. 31-85.
\bibitem[Tz1]{tz1} N. Tziolas, {\em Terminal $3$-fold Divisorial Contractions of a Surface to a Curve I,} Compositio Mathematica 139, No. 3, 2003.
\bibitem[Tz2]{tz2} N. Tziolas, {\em Families of $D$-minimal Models and Applications to $3$-fold Divisorial Contractions,} Proceedings of the London Mathematical Society (2005) 90 (2), pp. 345-370.
\bibitem[Tz3]{tz3} N. Tziolas, {\em Three Dimensional Divisorial Extremal Neighbourhoods,} Math Ann. 333, 2005.
\bibitem[Tz4]{tz4} N. Tziolas, {\em $3$-fold Divisorial Extremal Neighbourhoods over $cE_7$ and $cE_6$ Compound duVal Singularities,} Int. J. Math., 21, 1 (2010).
\bibitem[Y]{yosh} Y. Yoshino, {\em Cohen-Macaulay Modules over Cohen-Macaulay Rings,} London Mathematical Society Lecture Note Series, CUP, 1990.
\end{thebibliography}
\end{document}